\newtheorem{theorem}{Theorem}[section]
\newtheorem{lemma}{Lemma}[section]
\newtheorem{proposition}{Proposition}[section]
\numberwithin{equation}{section}
\numberwithin{definition}{section}
\def\be{\begin{equation}}
\def\ee{\end{equation}}
\newcommand{\RR}{\ensuremath{\mathbb{R}}}
\newcommand{\QQ}{\ensuremath{\mathbb{Q}}}
\newcommand{\NN}{\ensuremath{\mathbb{N}}}
\newcommand{\PP}{\ensuremath{\mathbb{P}}}
\newcommand{\OFP}{(\Omega,\mathcal{F},\PP)}
\newcommand{\norm}[1]{\ensuremath{\lVert#1\rVert}}
\DeclareMathOperator{\spanned}{span}
\begin{document}

\title{Formulas for Generalized Principal Lyapunov Exponent for Parabolic PDEs\footnote{This is a pre-copy-editing, author-produced PDF of an article accepted for publication in \emph{Discrete and Continuous Dynamical Systems Series S} following peer review. The definitive publisher-authenticated version \emph{Discrete and Continuous Dynamical Systems Series S} \textbf{9}(4) (2016), pp. 1189--1199, is available online at:  http://dx.doi.org/10.3934/dcdss.2016048}}

\author{Janusz Mierczy\'nski
\thanks{Supported by project S40036/K1101.}
\\
Institute of Mathematics\\
Wroc{\l}aw University of Technology\\
Wybrze\.ze Wyspia\'nskiego 27\\
PL-50-370 Wroc{\l}aw\\
Poland\\
\\
and
\\
\\
Wenxian Shen
\\
Department of Mathematics\\
Auburn University\\
Auburn University, AL 36849\\
USA
}
\date{}

\maketitle

\begin{abstract}
An integral formula is given representing the generalized principal Lyapunov exponent for random linear parabolic PDEs.  As an application, an upper estimate of the exponent is obtained.
\end{abstract}

\noindent {\bf AMS Subject Classification.} Primary: 37H15; Secondary: 35K10, 35R60.

\noindent {\bf Key Words.} Random linear parabolic partial differential equation,  measurable linear skew-product semiflow,  generalized principal Lyapunov exponent,  Dirichlet form.

\section{Introduction}

In~\cite{MiShPart3} the current authors presented the theory of generalized principal Lyapunov exponents for linear random parabolic partial differential equations (PDEs) of the form
\begin{equation*}
\begin{aligned}
\frac{\partial u}{\partial t} = & \sum_{i=1}^{N} \frac{\partial}{\partial x_i} \biggl( \sum_{j=1}^{N} a_{ij}(\theta_{t}\omega,x) \frac{\partial u}{\partial x_{j}} + a_{i}(\theta_{t}\omega,x) u \biggr) +
\sum_{i=1}^N b_i(\theta_{t}\omega,x)\frac{\partial u}{\partial x_i} &
\\
& + c_0(\theta_{t}\omega,x)u, \qquad t > s, \ x \in D,
\end{aligned}
\end{equation*}
where $D \subset \mathbb{R}^N$, endowed with boundary conditions of either Dirichlet or Robin type, driven by an ergodic flow $(\theta_t)_{t \in \mathbb{R}}$ on a probability space $\OFP$.  The second- and first-order coefficients of the equation are assumed to be bounded uniformly in $\omega \in \Omega$, whereas as concerns zero-order coefficients some integral-type conditions are required.  That generalizes the theory in~\cite{MiSh3}, where it was assumed that all the coefficients are bounded.

The generalized principal Lyapunov exponent is defined as the logarithmic growth rate of some distinguished solutions (see Theorem~\ref{thm:floquet}(i)--(ii)).  It comes out that the generalized principal Lyapunov exponent is just equal to the top Lyapunov exponent (Theorem~\ref{thm:floquet}(iv)).  Moreover, for any nontrivial nonnegative solution its logarithmic growth rate equals the generalized principal Lyapunov exponent (Theorem~\ref{thm:floquet}(iii)).

Bearing in mind that Lyapunov exponents have relevance for establishing stability/instability of nonlinear PDEs of parabolic type, it is very important to find ways of estimating them.

It is the purpose of the present paper to give some estimates of formulas for the generalized principal Lyapunov exponent.

The paper is organized as follows.
In Section~\ref{sect:preliminary} some preliminaries are given.  In~particular, the standing assumptions are established, and necessary results from\break \cite{MiShPart3} are presented.
Section~\ref{sect:integral} is devoted to the proof of the integral formula of the generalized principal Lyapunov exponent (Theorem~\ref{thm:kappa}).
In Section~\ref{sect:estimates} an upper estimate is presented of the generalized principal Lyapunov exponent in terms of the principal eigenvalues of the (elliptic) equations with ``frozen'' coefficients (Theorem~\ref{thm:estimate}).

\section{Preliminaries}
\label{sect:preliminary}
In the present section we introduce the main concepts and assumptions, and formulate the main results on generalized principal Lyapunov exponents, as presented in~\cite{MiShPart3}.

By a {\em measurable space\/} we understand a pair $(P, \mathfrak{P})$, where $P$ is a set and $\mathfrak{P}$ is a $\sigma$\nobreakdash-\hspace{0pt}algebra of subsets of $P$.  For measurable spaces $(P, \mathfrak{P})$ and $(R, \mathfrak{R})$, a function $f \colon P \to R$ is {\em $(\mathfrak{P}, \mathfrak{R})$\nobreakdash-\hspace{0pt}measurable\/} if for any $A \in \mathfrak{R}$ its preimage, $f^{-1}(A)$, belongs to $\mathfrak{P}$.

By a {\em measure space\/} we understand a triple $(P, \mathfrak{P}, \mu)$, where $(P, \mathfrak{P})$ is a measurable space and $\mu$ is a measure defined on $\mathfrak{P}$.  When $\mu$ is finite we speak of a {\em finite measure space\/}, and when $\mu(P) = 1$ we speak of a {\em probability space\/}.

For a metrizable space $X$, $\mathfrak{B}(X)$ denotes the $\sigma$\nobreakdash-\hspace{0pt}algebra of Borel sets of $X$.

For further reference, we give now a special form of Pettis' theorem.
\begin{proposition}
\label{prop:Pettis}
Let $(P, \mathfrak{P}, \mu)$ be a finite measure space and let $C(Y)$ be the separable Banach space of all continuous real functions on a compact metrizable $Y$.  For a function $f \colon P \to C(Y)$ the following properties are equivalent.
\begin{itemize}
\item[\textup{(a)}]
$f$ is $(\mathfrak{P}, \mathfrak{B}(C(Y)))$\nobreakdash-\hspace{0pt}measurable.
\item[\textup{(b)}]
For each $y \in Y$ the function
\begin{equation*}
\bigl[ \, P \ni p \mapsto f(p)(y) \in \RR \, \bigr]
\end{equation*}
is $(\mathfrak{P}, \mathfrak{B}(\RR))$\nobreakdash-\hspace{0pt}measurable.
\end{itemize}
\end{proposition}
\begin{proof}
By \cite[Lemma 11.37 on p.\ 424]{AliB}, the function $f$ in (a) [resp.\ the functions $[\,  p \mapsto f(p)(y) \, ]$ in (b)] are measurable if and only if they are strongly measurable (that is, $\mu$\nobreakdash-\hspace{0pt}a.e.\ limits of simple functions).  \cite[Corollary on pp.\ 42--43]{DiUhl} implies that $f$ is strongly measurable if and only if for each $y \in Y$ the functions $[\,  p \mapsto f(p)(y) \, ]$ are strongly measurable.
\end{proof}

Assume that $(\OFP,(\theta_t)_{t \in \RR})$ is a {\em metric flow\/}:  $\OFP$ is a probability space, and $\theta \colon \RR \times \Omega \to \Omega$ is a $(\mathfrak{B}(\RR) \otimes \mathfrak{F},
\mathfrak{F})$\nobreakdash-\hspace{0pt}measurable mapping  such that the following holds, where, for $t \in \RR$ and $\omega \in \Omega$, $\theta_{t}\omega$ stands for $\theta(t,\omega)$:
\begin{itemize}
\item
$\theta_{0}\omega = \omega$ for any $\omega \in \Omega$,
\item
$\theta_{t_1 + t_2} \omega = \theta_{t_2}(\theta_{t_1}\omega)$ for
any $t_1, t_2 \in \RR$ and any $\omega \in \Omega$,
\item
for each $t \in \RR$ the mapping $\theta_t \colon \Omega \to \Omega$ is
$\PP$\nobreakdash-\hspace{0pt}preserving (i.e., $\PP(\theta_t^{-1}(F)) = \PP(F)$ for any $F \in \mathfrak{F}$ and $t \in \RR$).
\end{itemize}
Sometimes we write simply $(\theta_t)_{t \in \RR}$ for a metric flow.

Throughout the paper the standing assumption is that $(\OFP,(\theta_t)_{t \in \RR})$ is a metric flow which is moreover {\em ergodic\/}:  If $F \in \mathfrak{F}$ is such that $\theta_{t}(F) = F$ for all $t \in \RR$ (in other words, $F$ is {\em invariant\/}), then either $\PP(F) = 0$ or $\PP(F) = 1$. Furthermore, the probability measure $\PP$ is assumed to be complete.

Consider a family, indexed by $\omega \in \Omega$, of linear second order partial differential equations
\begin{equation}
\label{main-eq}
\begin{aligned}
\frac{\partial u}{\partial t} = & \sum_{i=1}^{N} \frac{\partial}{\partial x_i} \biggl( \sum_{j=1}^{N} a_{ij}(\theta_{t}\omega,x) \frac{\partial u}{\partial x_{j}} + a_{i}(\theta_{t}\omega,x) u \biggr) +
\sum_{i=1}^N b_i(\theta_{t}\omega,x)\frac{\partial u}{\partial x_i} &
\\
& + c_0(\theta_{t}\omega,x)u, \qquad t > s, \ x \in D,
\end{aligned}
\end{equation}
where $s \in \RR$ is an initial time and $D \subset \RR^N$ is a bounded domain with boundary $\partial D$, complemented with boundary condition
\begin{equation}
\label{main-bc}
\mathcal{B}_{\theta_{t}\omega} u = 0, \quad t > s, \ x \in \partial D,
\end{equation}
where
\begin{equation*}
\mathcal{B}_{\omega} u =
\begin{cases}
u & \text{(Dirichlet)}
\\[1.5ex]
\displaystyle \sum_{i=1}^N \biggl( \sum_{j=1}^N a_{ij}(\omega, x) \frac{\partial u}{\partial x_j} + a_{i}(\omega, x) u \biggr) \nu_i & \text{(Neumann).}
\\[1.5ex]
\displaystyle \sum_{i=1}^N \biggl( \sum_{j=1}^N a_{ij}(\omega, x) \frac{\partial u}{\partial x_j} + a_{i}(\omega, x) u \biggr) \nu_i + d_0(\omega, x)u  & \text{(Robin).}
\end{cases}
\end{equation*}
Above, $\nu = (\nu_1, \dots, \nu_{N})$ denotes the unit normal vector pointing out~of $\partial D$.

When we want to emphasize that \eqref{main-eq}+\eqref{main-bc} is considered for some (fixed) $\omega \in \Omega$ we write \eqref{main-eq}$_{\omega}$+\eqref{main-bc}$_{\omega}$.

Throughout the present paper, $\norm{\cdot}$ stands for the standard norm in $L_2(D)$ or for the standard norm in $\mathcal{L}(L_2(D))$ (= the Banach space of bounded linear operators from $L_2(D)$ into $L_2(D)$), depending on the context.  $L_2(D)^{+}$ denotes the set of those functions in $L_2(D)^{+}$ that are nonnegative Lebesgue-a.e.\ on $D$.

We make the following assumptions ($\alpha$ is a positive constant).

\medskip
\noindent \textbf{(A0)} (Boundary regularity) {\em $D \subset \RR^{N}$ is a bounded domain with boundary $\partial D$ of class $C^{3 + \alpha}$, for some $\alpha > 0$.}
\newline
(This is the first item in assumption (R)(ii) in~\cite{MiShPart3}.)

\medskip
\noindent \textbf{(A1)} (Measurability) {\em The functions $a_{ij} \colon \Omega \times D \to \RR$ \textup{(}$i, j = 1, \dots, N$\textup{)}, $a_{i} \colon \Omega \times D \to \RR$ \textup{(}$i = 1, \dots, N$\textup{)}, $b_{i} \colon \Omega \times D \to \RR$
\textup{(}$i = 1, \dots, N$\textup{)} and $c_{0} \colon \Omega \times D \to \RR$ are $(\mathfrak{F} \otimes \mathfrak{B}(D), \mathfrak{B}(\RR))$-measurable.  In the case of Robin boundary conditions the function $d_{0} \colon \Omega \times \partial D \to [0, \infty)$ is $(\mathfrak{F} \otimes \mathfrak{B}(\partial D), \mathfrak{B}(\RR))$\nobreakdash-\hspace{0pt}measurable.}
\newline
(This is assumption (PA1) in~\cite{MiShPart3}.)

For $\omega \in \Omega$ let $a_{ij}^{\omega} \colon \RR \times D \to \RR$ be defined as
\begin{equation*}
a_{ij}^{\omega}(t, x) := a_{ij}(\theta_{t}\omega, x),
\end{equation*}
and similarly for $a_{i}^{\omega}$, etc.

\medskip
\noindent \textbf{(A2)}
\begin{enumerate}
\item[\textup{(i)}]
(Boundedness of second order terms) {\em For each $\omega \in \Omega$ the functions $a_{ij}^{\omega}$ \textup{(}$i, j = 1, \dots, N$\textup{)} and $a_{i}^{\omega}$ \textup{(}$i= 1, \dots, N$\textup{)} belong to $C^{2+\alpha,2+\alpha}(\RR \times \bar{D})$.  Moreover, their $C^{2+\alpha,2+\alpha}(\RR \times \bar{D})$\nobreakdash-\hspace{0pt}norms are bounded uniformly in $\omega \in \Omega$.}
\item[\textup{(ii)}]
(Boundedness of first order terms) {\em For each $\omega \in \Omega$ the functions $b_{i}^{\omega}$ \textup{(}$i= 1, \dots, N$\textup{)} belong to $C^{2+\alpha,1+\alpha}(\RR \times \bar{D})$.  Moreover, their
$C^{2+\alpha,1+\alpha}(\RR \times \bar{D})$\nobreakdash-\hspace{0pt}norms are bounded uniformly in $\omega \in \Omega$.}
\item[\textup{(iii)}]
(Boundedness of boundary terms) {\em In the Robin boundary condition case, for each  $\omega \in \Omega$ the function $d_{0}^{\omega}$ belongs to $C^{2+\alpha,2+\alpha}(\RR \times \partial D)$.  Moreover, their $C^{2+\alpha,2+\alpha}(\RR \times \partial D)$\nobreakdash-\hspace{0pt}norms are bounded uniformly in $\omega \in \Omega$.}
\item[\textup{(iv)}]
(Local regularity of zero order terms) {\em
For each $\omega \in \Omega$ and each $T > 0$ the restriction $c^{\omega}_0|_{[0, T] \times \bar{D}}$ belongs to $C^{3, 2}([0, T] \times \bar{D})$;
}
\end{enumerate}
\noindent
((A2)(i)--(iii) are just the second, third and fourth items in assumption (R)(ii) in~\cite{MiShPart3}.)

\smallskip
For each $\omega \in \Omega$ put
\begin{equation*}
c_0^{(-)}(\omega) := - \bigl( \inf\limits_{x \in \bar{D}} c_0(\omega, x) \bigr)^{-},
\qquad
c_0^{(+)}(\omega) :=  \bigl( \sup\limits_{x \in \bar{D}} c_0(\omega, x) \bigr)^{+}.
\end{equation*}
Under (A2)(iv), for each $\omega \in \Omega$, $-\infty < c_0^{(-)}(\omega) \le 0 \le c_0^{(+)}(\omega) < \infty$.  Moreover, the following result holds, which we state here for further reference.
\begin{lemma}
\label{lm:c-zero-pm}
Assume \textup{(A1)} and \textup{(A2)(iv)}.
\begin{enumerate}
\item[\textup{(i)}]
The mappings $[ \, \Omega \ni \omega \mapsto c_0^{(\pm)}(\omega) \in \RR \, ]$
are $(\mathfrak{F}, \mathfrak{B}(\RR))$\nobreakdash-\hspace{0pt}measurable.
\item[\textup{(ii)}]
For each $\omega \in \Omega$ the mappings $[\,\RR \ni t \mapsto c_0^{(\pm)}(\theta_{t}\omega) \in \RR \,]$ are continuous.
\end{enumerate}
\end{lemma}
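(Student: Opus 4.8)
The plan is to reduce both assertions to the two scalar auxiliary functions
\[
\Sigma(\omega) := \sup_{x \in \bar{D}} c_0(\omega, x), \qquad I(\omega) := \inf_{x \in \bar{D}} c_0(\omega, x),
\]
since $c_0^{(+)}(\omega) = (\Sigma(\omega))^{+}$ and $c_0^{(-)}(\omega) = -(I(\omega))^{-}$, and the scalar maps $a \mapsto a^{+}$ and $a \mapsto -a^{-}$ are continuous on $\RR$. Thus measurability of $\Sigma$ and $I$ will give (i), and continuity of $t \mapsto \Sigma(\theta_t\omega)$ and $t \mapsto I(\theta_t\omega)$ will give (ii), after composing with these continuous scalar maps. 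Observe first that, by (A2)(iv) evaluated along $t = 0$, for each $\omega$ the section $c_0(\omega, \cdot) = c_0^{\omega}(0, \cdot)$ is continuous on the compact set $\bar{D}$; hence the supremum and infimum above are attained and finite.

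For (i), I would fix a countable set $Q \subset D$ that is dense in $\bar{D}$ (possible since $D$ is dense in its closure and is separable). Continuity of $c_0(\omega, \cdot)$ on $\bar{D}$ then gives
\[
\Sigma(\omega) = \sup_{x \in Q} c_0(\omega, x), \qquad I(\omega) = \inf_{x \in Q} c_0(\omega, x).
\]
By (A1) the map $c_0$ is $(\mathfrak{F} \otimes \mathfrak{B}(D), \mathfrak{B}(\RR))$-measurable, so for each fixed $x \in Q \subset D$ the section $[\,\omega \mapsto c_0(\omega, x)\,]$ is $\mathfrak{F}$-measurable. A countable supremum (resp.\ infimum) of $\mathfrak{F}$-measurable functions is $\mathfrak{F}$-measurable, whence $\Sigma$ and $I$ are measurable and (i) follows. (Alternatively, one could regard $[\,\omega \mapsto c_0(\omega, \cdot)\,]$ as a $C(\bar{D})$-valued map, establish its measurability through Proposition~\ref{prop:Pettis}, and then compose with the Lipschitz functionals $f \mapsto \sup f$ and $f \mapsto \inf f$.)

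For (ii), fix $\omega$ and recall $c_0^{\omega}(t,x) = c_0(\theta_t\omega, x)$. I first claim that $c_0^{\omega}$ is jointly continuous on $\RR \times \bar{D}$. Indeed, given $s \in \RR$, set $\omega_s := \theta_{s-1}\omega$; the cocycle identity yields $\theta_t\omega = \theta_{\,t-(s-1)}\omega_s$, so $c_0^{\omega}(t,x) = c_0^{\omega_s}(t-(s-1), x)$ for all $t$, and by (A2)(iv) applied to $\omega_s$ with $T = 2$ the right-hand side is continuous on a neighbourhood of $\{s\} \times \bar{D}$. As $s$ is arbitrary, joint continuity on $\RR \times \bar{D}$ follows. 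Finally, since $\bar{D}$ is compact, $t \mapsto \Sigma(\theta_t\omega) = \sup_{x \in \bar{D}} c_0^{\omega}(t,x)$ and $t \mapsto I(\theta_t\omega)$ are continuous: for $t_n \to t$, uniform continuity of $c_0^{\omega}$ on the compact slab $[t-1, t+1] \times \bar{D}$ gives $\abs{\Sigma(\theta_{t_n}\omega) - \Sigma(\theta_t\omega)} \le \sup_{x}\abs{c_0^{\omega}(t_n,x) - c_0^{\omega}(t,x)} \to 0$, and likewise for $I$. Composing with $(\cdot)^{+}$ and $-(\cdot)^{-}$ yields (ii).

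The only genuinely delicate points are the two reductions that make the problem one-dimensional: in (i), passing from the uncountable supremum over $\bar{D}$ to a countable one, which rests on the $x$-continuity supplied by (A2)(iv); and in (ii), upgrading the merely local-in-time regularity of (A2)(iv) on $[0,T]$ to joint continuity on all of $\RR \times \bar{D}$, for which the flow/cocycle property is exactly the ingredient that reaches negative times. Everything else is the standard stability of measurability and continuity under countable suprema, composition with continuous scalar maps, and maximization of a jointly continuous function over a compact set.
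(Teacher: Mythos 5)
Your proposal is correct and takes essentially the same route as the paper's proof: part (i) by reducing to the sup/inf over a countable dense set of points, where (A1) gives measurability of the sections and countable suprema/infima of measurable functions are measurable, and part (ii) from the continuity of $t \mapsto c_0^{\omega}(t,\cdot)$ in $C(\bar{D})$. You merely make explicit two details the paper leaves implicit: choosing the dense set inside $D$ (where (A1) actually applies) rather than in $\bar{D}$, and invoking the flow identity $c_0^{\omega}(t,x) = c_0^{\theta_s\omega}(t-s,x)$ to extend the regularity of (A2)(iv), stated only on $[0,T]$, to all of $\RR$.
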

\begin{proof}
In order to prove part (i) it suffices to prove that $[ \,\omega \mapsto \inf\limits_{x \in \bar{D}} c_0(\omega, x) \, ]$ and $[ \, \omega \mapsto \sup\limits_{x \in \bar{D}} c_0(\omega, x) \, ]$ are $(\mathfrak{F}, \mathfrak{B}(\RR))$\nobreakdash-\hspace{0pt}measurable.  It follows from (A2)(iv) that $c_0(\omega, \cdot)$ belongs to $C(\bar{D})$.  Let $\{x_k\}_{k=1}^{\infty}$ be a dense subset of $\bar{D}$.  It is a consequence of (A1) that for each $k$ the function $[ \,\omega \mapsto c_0(\omega, x_k) \, ]$ is $(\mathfrak{F}, \mathfrak{B}(\RR))$\nobreakdash-\hspace{0pt}measurable.  Since
\begin{equation*}
\inf\limits_{x \in \bar{D}} c_0(\omega, x) = \inf\limits_{k \in \NN} c_0(\omega, x_k) \text{ and } \sup\limits_{x \in \bar{D}} c_0(\omega, x) = \sup\limits_{k \in \NN} c_0(\omega, x_k),
\end{equation*}
we conclude the proof of part (i) by using the fact that the infimum/supremum of a countable family of measurable functions is measurable.

As (A2)(iv) implies that for each $\omega \in \Omega$ the mapping $[\,\RR \ni t \mapsto c_0^{\omega}(t, \cdot) \in C(\bar{D}) \,]$ is continuous, part (ii) follows in a straightforward way.
\end{proof}

\medskip
\noindent \textbf{(A3)} (Ellipticity) {\em There exists $\alpha_0 > 0$ such that for each $\omega \in \Omega$ there holds
\begin{equation*}
\sum\limits_{i,j=1}^{N} a_{ij}(\theta_{t}\omega, x) \xi_{i} \xi_{j} \ge \alpha_0 \sum\limits_{i=1}^{N} \xi_{i}^2, \qquad \xi = (\xi_1, \dots, \xi_{N}) \in \RR^{N},
\end{equation*}
and
\begin{equation*}
a_{ij}(\theta_{t}\omega, x) = a_{ji}(\theta_{t}\omega, x), \qquad i, j = 1, 2, \dots, N,
\end{equation*}
for Lebesgue\nobreakdash-\hspace{0pt}a.e.\ $(t, x) \in \RR \times D$.
}
\newline (This is assumption (PA3) in~\cite{MiShPart3}.)

\medskip
Under (A0) through (A3), for any $\omega \in \Omega$, $s \in \RR$ and $u_0 \in L_2(D)$ there exists a unique {\em global weak solution\/}, $u_0(\cdot, s, \omega, u_0)$, of \textup{(\ref{main-eq})$_{\omega}$+(\ref{main-bc})$_{\omega}$} with initial condition $u(s) = u_0$.  Moreover, this weak solution is in~fact a {\em classical\/} solution:
\eqref{main-eq}$_{\omega}$ is satisfied pointwise on $(s, \infty) \times D$ and \eqref{main-bc}$_{\omega}$ is satisfied pointwise on $(s, \infty) \times \partial D$ (see, e.g., \cite[Prop. 2.5.1]{MiSh3}).

Define
\begin{equation}
\label{U-eq}
U_{\omega}(t)u_0 := u(t,0;\omega,u_0), \quad (\omega \in \Omega, \ t \ge 0, \ u_0 \in L_2(D)).
\end{equation}
$\Phi = ((U_\omega(t))_{\omega \in \Omega, t \in \RR^{+}}, (\theta_t)_{t\in\RR})$ is a measurable linear skew-\hspace{0pt}product semidynamical system on $L_2(D)$ covering a metric dynamical system $(\theta_{t})_{t \in \RR}$, called the {\em measurable linear skew-product semiflow\/} on $L_2(D)$ {\em generated by\/} \eqref{main-eq}+\eqref{main-bc} (for definitions see~\cite{MiShPart3}).

\begin{lemma}
\label{lm:gamma}
Assume \textup{(A0)--(A3)}.  Then there exists $\gamma \in \RR$ such that
\begin{equation*}
\norm{U_{\omega}(t) u_0} \le e^{{\gamma} t} \exp\biggl(\int\limits_{0}^{t} c_0^{(+)}(\theta_{\tau} \omega) \, d\tau \biggr) \norm{u_0}
\end{equation*}
for all $\omega \in \Omega$, $u_0 \in L_2(D)^{+}$ and $t > 0$.
\end{lemma}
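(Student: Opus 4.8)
The plan is to derive the bound from a Gronwall-type energy estimate in $L_2(D)$. Fix $\omega \in \Omega$ and $u_0 \in L_2(D)^{+}$, and write $u(t) := U_{\omega}(t)u_0$. Since, under (A0)--(A3), the weak solution is in fact classical on $(0,\infty) \times D$ and the semiflow is strongly continuous on $L_2(D)$, the scalar function $t \mapsto \norm{u(t)}^2 = \int_{D} u(t,x)^2 \, dx$ is locally absolutely continuous with $\norm{u(t)} \to \norm{u_0}$ as $t \to 0^{+}$, and for $t > 0$ I would compute
\begin{equation*}
\frac{1}{2}\frac{d}{dt}\norm{u(t)}^2 = \int_{D} u\,\frac{\p u}{\p t}\,dx .
\end{equation*}

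First I would substitute the right-hand side of \eqref{main-eq} and integrate by parts in the divergence-form second-order term. This produces the interior contribution $-\int_{D} \sum_{i,j} a_{ij}\,\p_{x_i}u\,\p_{x_j}u\,dx - \int_{D} \sum_i a_i u\,\p_{x_i}u\,dx$ together with the boundary integral $\int_{\p D} u\,\sum_i\bigl(\sum_j a_{ij}\p_{x_j}u + a_i u\bigr)\nu_i\,dS$. The boundary integral vanishes under Dirichlet ($u = 0$ on $\p D$) and Neumann conditions, and under the Robin condition it equals $-\int_{\p D} d_0 u^2\,dS \le 0$ because $d_0 \ge 0$; in every case it may be discarded. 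Ellipticity (A3) then bounds the leading interior term from above by $-\alpha_0\int_{D}\abs{\nabla u}^2\,dx$.

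Next I would treat the two remaining first-order contributions, namely $-\int_{D}\sum_i a_i u\,\p_{x_i}u$ arising from the divergence form and the advection term $\int_{D} u\sum_i b_i\,\p_{x_i}u$, by Young's inequality, dominating each by $\varepsilon\int_{D}\abs{\nabla u}^2\,dx + C_\varepsilon\norm{u}^2$. Choosing $\varepsilon$ small enough that the total coefficient of $\int_{D}\abs{\nabla u}^2\,dx$ does not exceed $\alpha_0$, these gradient terms are absorbed into the negative-definite leading term. The zero-order term is controlled pointwise: since $c_0(\theta_t\omega,x) \le c_0^{(+)}(\theta_t\omega)$ for all $x$ and $u^2 \ge 0$, one has $\int_{D} c_0(\theta_t\omega,\cdot)u^2\,dx \le c_0^{(+)}(\theta_t\omega)\norm{u}^2$. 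Collecting everything, I arrive at
\begin{equation*}
\frac{d}{dt}\norm{u(t)}^2 \le 2\bigl(\gamma + c_0^{(+)}(\theta_t\omega)\bigr)\norm{u(t)}^2 ,
\end{equation*}
and integrating this differential inequality and taking square roots yields the claimed estimate.

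The point requiring the most care, and where the hypotheses are genuinely used, is the uniformity of $\gamma$ in $\omega$. The constant $\gamma$ originates solely from the Young constants $C_\varepsilon$, hence depends only on $N$, on $\alpha_0$, and on the suprema of $\abs{a_i}$ and $\abs{b_i}$; by the uniform-in-$\omega$ bounds of (A2)(i)--(ii) these suprema are bounded independently of $\omega$, so a single $\gamma$ serves for all $\omega$ simultaneously. (The Robin coefficient $d_0$ enters only through a term of favourable sign that is dropped, so no bound on it is needed.) I note finally that nonnegativity of $u_0$ is never used in this argument: the estimate in fact holds for every $u_0 \in L_2(D)$, the restriction to $L_2(D)^{+}$ being merely what is invoked in the sequel.
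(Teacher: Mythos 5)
Your proof is correct, but it follows a genuinely different route from the paper. The paper disposes of this lemma in one line, by citing Propositions 3.4 and 3.5 of \cite{MiShPart3} together with the monotonicity of the norm $\norm{\cdot}$ on nonnegative functions ($0 \le u \le v$ implies $\norm{u} \le \norm{v}$); that is a comparison-principle-type argument, which is precisely why the statement is formulated only for $u_0 \in L_2(D)^{+}$. You instead give a self-contained energy estimate: the identity $\tfrac{1}{2}\tfrac{d}{dt}\norm{u(t)}^2 = -B_{\theta_t\omega}(u(t),u(t))$ (which the paper itself uses elsewhere, citing \cite[Prop.~2.1.4]{MiSh3} in the proof of Lemma~\ref{lm:Lyapunov-integral-kappa}, so the differentiability issue you gloss over with classical regularity plus continuity at $t=0^{+}$ is indeed available in the weak framework), followed by ellipticity, Young's inequality with the uniform bounds of (A2)(i)--(ii), the pointwise bound $c_0 \le c_0^{(+)}$, dropping the sign-favourable Robin boundary term, and Gronwall. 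Your closing observation is accurate and is the real payoff of your approach: the Gronwall argument never uses $u_0 \ge 0$, so you obtain the estimate for every $u_0 \in L_2(D)$, strictly more than the lemma claims, whereas the paper's comparison argument is intrinsically tied to nonnegative data. What the paper's route buys in exchange is brevity, since it leans on machinery already established in \cite{MiShPart3}; your route buys independence from that reference and a slightly stronger conclusion.
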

\begin{proof}
The inequality follows by~\cite[Props.\ 3.4 and 3.5]{MiShPart3} and the monotonicity of the norm $\norm{\cdot}$ (see, e.g., \cite[Subsect.\ 2.3]{MiShPart1}).
\end{proof}

We make further assumptions on zero-order terms.

\smallskip
\noindent\textbf{(A4)} (Zero order terms) {\em
\begin{itemize}
\item[\textup{(i)}]
The mapping $\bigl[\, \Omega \ni \omega \mapsto \int\limits_0^1 c_0^{(+)}(\theta_{t}\omega) \, dt \in [0, \infty) \,\bigr]$ belongs to $L_1(\OFP)$;
\item[\textup{(ii)}]
the mapping $\bigl[\, \Omega \ni \omega \mapsto \ln^{+} \int\limits_0^1 \bigl(c_0^{(+)}(\theta_{t}\omega) - c_0^{(-)}(\theta_{t}\omega)\bigr) \, dt \in [0, \infty) \,\bigr]$ belongs to $L_1(\OFP)$;
\end{itemize}
}
\noindent (These are assumptions (PA0)(i)--(ii) in~\cite{MiShPart3}.)

\begin{lemma}
\label{lm:L1}
Assume \textup{(A0)--(A2)} and \textup{(A4)(i)}.  Then $c_0^{(+)} \in L_1(\OFP)$.
\end{lemma}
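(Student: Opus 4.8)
The plan is to deduce the integrability of $c_0^{(+)}$ over $\Omega$ from assumption (A4)(i) by combining Tonelli's theorem with the $\PP$\nobreakdash-\hspace{0pt}invariance of the flow. First I would record two facts about the integrand. Nonnegativity of $c_0^{(+)}$ is immediate from its definition (indeed $0 \le c_0^{(+)}(\omega) < \infty$ for every $\omega$ under (A2)(iv)), and its $(\mathfrak{F}, \mathfrak{B}(\RR))$\nobreakdash-\hspace{0pt}measurability is exactly the content of Lemma~\ref{lm:c-zero-pm}(i). Next I would verify that the map $[\,(t,\omega) \mapsto c_0^{(+)}(\theta_{t}\omega)\,]$ from $\RR \times \Omega$ into $\RR$ is $(\mathfrak{B}(\RR) \otimes \mathfrak{F}, \mathfrak{B}(\RR))$\nobreakdash-\hspace{0pt}measurable, since it is the composition of the flow $\theta$, which is $(\mathfrak{B}(\RR) \otimes \mathfrak{F}, \mathfrak{F})$\nobreakdash-\hspace{0pt}measurable by the standing hypothesis on the metric flow, with the $(\mathfrak{F}, \mathfrak{B}(\RR))$\nobreakdash-\hspace{0pt}measurable function $c_0^{(+)}$. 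In particular its restriction to $[0,1] \times \Omega$ is jointly measurable.

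Because the integrand is nonnegative and jointly measurable, Tonelli's theorem applies on the product $[0,1] \times \Omega$ with no a priori integrability needed, and permits interchanging the order of integration:
\begin{equation*}
\int_{\Omega} \int_{0}^{1} c_0^{(+)}(\theta_{t}\omega) \, dt \, d\PP(\omega) = \int_{0}^{1} \int_{\Omega} c_0^{(+)}(\theta_{t}\omega) \, d\PP(\omega) \, dt.
\end{equation*}
For each fixed $t$ the map $\theta_{t} \colon \Omega \to \Omega$ is $\PP$\nobreakdash-\hspace{0pt}preserving, so the change of variables $\omega \mapsto \theta_{t}\omega$ gives $\int_{\Omega} c_0^{(+)}(\theta_{t}\omega) \, d\PP(\omega) = \int_{\Omega} c_0^{(+)}(\omega) \, d\PP(\omega)$, a quantity independent of $t$. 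Substituting this into the right-hand side above and integrating over $t \in [0,1]$ yields
\begin{equation*}
\int_{\Omega} \int_{0}^{1} c_0^{(+)}(\theta_{t}\omega) \, dt \, d\PP(\omega) = \int_{\Omega} c_0^{(+)}(\omega) \, d\PP(\omega).
\end{equation*}

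Finally, the left-hand side is finite precisely by assumption (A4)(i), which asserts that $[\,\omega \mapsto \int_{0}^{1} c_0^{(+)}(\theta_{t}\omega) \, dt\,]$ belongs to $L_1(\OFP)$. Hence $\int_{\Omega} c_0^{(+)} \, d\PP < \infty$, that is, $c_0^{(+)} \in L_1(\OFP)$, as claimed. I do not expect a genuine obstacle here; the only point requiring explicit care is the justification of the interchange of integrals, and this is handled cleanly by Tonelli (rather than Fubini) once the joint measurability of $[\,(t,\omega) \mapsto c_0^{(+)}(\theta_{t}\omega)\,]$ has been checked, since the integrand is nonnegative throughout.
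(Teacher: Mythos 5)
Your proof is correct and follows essentially the same route as the paper's: establish joint measurability of $[\,(t,\omega) \mapsto c_0^{(+)}(\theta_{t}\omega)\,]$, apply Tonelli on the product $[0,1]\times\Omega$, use the $\PP$\nobreakdash-\hspace{0pt}invariance of each $\theta_{t}$ to identify the inner integral with $\int_{\Omega} c_0^{(+)}\,d\PP$, and invoke (A4)(i) for finiteness. The only point where you diverge is the justification of joint measurability: you compose the flow $\theta$, which is $(\mathfrak{B}(\RR)\otimes\mathfrak{F},\mathfrak{F})$\nobreakdash-\hspace{0pt}measurable by the paper's definition of a metric flow, with the $(\mathfrak{F},\mathfrak{B}(\RR))$\nobreakdash-\hspace{0pt}measurable function $c_0^{(+)}$ from Lemma~\ref{lm:c-zero-pm}(i), whereas the paper instead uses a Carath\'eodory-function argument (\cite[Lemma 4.51]{AliB}): measurability in $\omega$ for fixed $t$ combined with continuity in $t$ for fixed $\omega$ from Lemma~\ref{lm:c-zero-pm}(ii). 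Your composition argument is, if anything, more direct, and it is valid here precisely because the standing hypotheses assume $\theta$ to be jointly measurable; the paper's route has the mild advantage of not leaning on that joint measurability assumption, relying only on the separate measurability and continuity already recorded in Lemma~\ref{lm:c-zero-pm}.
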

\begin{proof}
By Lemma~\ref{lm:c-zero-pm}, $[\, \Omega \ni \omega \mapsto c_0^{(+)}(\theta_{t}\omega) \,]$ is, for a fixed $t \in \RR$, $(\mathfrak{F}, \mathfrak{B}(\RR))$\nobreakdash-\hspace{0pt}measurable, and $[\, \RR \ni t \mapsto c_0^{(+)}(\theta_{t}\omega) \,]$ is, for a fixed $\omega \in \Omega$, continuous.  It follows from \cite[Lemma 4.51 on p.~153]{AliB} that  the function
\begin{equation*}
\bigl[\, \Omega \times [0,1] \ni (\omega, t) \mapsto c_0^{(+)}(\theta_{t}\omega) \in \RR \, \bigr]
\end{equation*}
is $(\mathfrak{F} \otimes \mathfrak{B}([0,1]), \mathfrak{B}(\RR))$\nobreakdash-\hspace{0pt}measurable.  Since $c_0^{(+)}$ is nonnegative, we can apply Tonelli's theorem to conclude that
\begin{equation*}
\begin{aligned}
\int\limits_{\Omega} \biggl( \int\limits_{0}^{1} c_0^{(+)}(\theta_{t}\omega) \, dt \biggr) \, d\PP(\omega) & = \int\limits_{0}^{1} \biggl( \int\limits_{\Omega}  c_0^{(+)}(\theta_{t}\omega) \, d\PP(\omega) \biggr) \, dt
\\
& = \int\limits_{0}^{1} \biggl( \int\limits_{\Omega}  c_0^{(+)}(\omega) \, d(\theta_{t}\PP)(\omega) \biggr) \, dt.
\end{aligned}
\end{equation*}
But $\theta_{t}\PP = \PP$ for any $t \in \RR$, so it follows that $c_0^{(+)} \in L_1(\OFP)$.
\end{proof}

We give now one of the main results formulated and proved in \cite{MiShPart3}.
\begin{theorem}
\label{thm:floquet}
Assume \textup{(A0)--(A4)}. Then there are:
\begin{itemize}
\item[$\bullet$]
an invariant set $\tilde{\Omega}_0 \subset \Omega$, $\PP(\tilde{\Omega}_0) = 1$,
\item[$\bullet$]
an $(\mathfrak{F},
\mathfrak{B}(L^2(D)))$\nobreakdash-\hspace{0pt}measurable function $w \colon \tilde{\Omega}_0 \to L_2(D)^+$ with $\norm{w(\omega)} = 1$ for all $\omega \in \tilde{\Omega}_0$,
\end{itemize}
having the following properties:
\begin{itemize}
\item[\textup{(i)}]
\begin{equation*}
w(\theta_{t}\omega) = \frac{U_{\omega}(t)w(\omega)} {\norm{U_{\omega}(t)w(\omega)}}
\end{equation*}
for any $\omega \in \tilde{\Omega}_0$ and $t \ge 0$.
\item[\textup{(ii)}]
There is $\tilde{\lambda}_1 \in [-\infty,\infty)$ such that for any $\omega\in\tilde \Omega_0$,
\begin{equation*}
\tilde{\lambda}_1 = \lim_{t\to \infty} \frac{\rho_t(\omega)}{t} =
\int_{\Omega} \ln\rho_1 \, d\PP,
\end{equation*}
where
\begin{equation*}
\rho_t(\omega) = \norm{U_{\omega}(t)w(\omega)} \quad \text{for } t \ge 0
\end{equation*}
\item[\textup{(iii)}]
For any $\omega\in\tilde\Omega_0$ and $u \in L_2(D)^+ \setminus \{0\}$,
\begin{equation*}
\lim_{t \to \infty} \frac{\ln{\norm{U_{\omega}(t)u}}}{t} = \tilde{\lambda}_1.
\end{equation*}
\item[\textup{(iv)}]
For any $\omega\in\tilde\Omega_0$ and $u \in L_2(D) \setminus \{0\}$
\begin{equation*}
\limsup_{t \to \infty} \frac{\ln{\norm{U_{\omega}(t)u}}}{t} \le \tilde{\lambda}_1
\end{equation*}
and
\begin{equation*}
\lim_{t \to \infty} \frac{\ln{\norm{U_{\omega}(t)}}}{t} = \tilde{\lambda}_1.
\end{equation*}
\end{itemize}
\end{theorem}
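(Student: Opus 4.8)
The plan is to derive all four parts from three structural features of the semiflow $\Phi$: positivity (each $U_{\omega}(t)$ maps $L_2(D)^{+}$ into itself, by the parabolic maximum principle), smoothing together with strong positivity (for $t>0$, $U_{\omega}(t)$ is compact and, for $u\in L_2(D)^{+}\setminus\{0\}$, carries $u$ into the interior of the relevant cone in an ordered Banach space continuously embedded in $C^{1}(\bar{D})$, via the strong maximum principle and the Hopf boundary lemma), and the cocycle identity $U_{\omega}(t+s)=U_{\theta_{s}\omega}(t)\,U_{\omega}(s)$ over the ergodic base $(\theta_{t})_{t\in\RR}$. Throughout I use that the $L_2$\nobreakdash-norm is monotone on the cone (as in the proof of Lemma~\ref{lm:gamma}).

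First I would construct $w$ and $\tilde{\Omega}_0$ by a \emph{uniform focusing} argument. Strong positivity and compactness give a $t_0>0$ such that, for every $\omega$, the normalized image of $L_2(D)^{+}\setminus\{0\}$ under $U_{\omega}(t_0)$ has finite Hilbert projective diameter, bounded uniformly in $\omega$; hence $U_{\omega}(t)$ contracts Hilbert's metric at a uniform rate for large $t$. Fixing $u_0\in L_2(D)^{+}$ with $\norm{u_0}=1$, the pulled-back renormalized orbits $U_{\theta_{-t}\omega}(t)u_0/\norm{U_{\theta_{-t}\omega}(t)u_0}$ then form a Cauchy net whose limit $w(\omega)\in L_2(D)^{+}$, $\norm{w(\omega)}=1$, is independent of $u_0$; writing $U_{\theta_{-s}\omega}(s+t)=U_{\omega}(t)U_{\theta_{-s}\omega}(s)$ and passing to the limit yields equivariance~(i) on a full-measure invariant set $\tilde{\Omega}_0$. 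Measurability of $w$ I would get from Proposition~\ref{prop:Pettis}, checking that $\omega\mapsto (w(\omega),y)_{L_2}$ is measurable for each test $y$ as a pointwise limit of measurable approximants.

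For~(ii), equivariance and the cocycle identity turn $\rho_t$ into a multiplicative cocycle, $\rho_{t+s}(\omega)=\rho_{s}(\omega)\,\rho_{t}(\theta_{s}\omega)$, so that $g:=\ln\rho_1$ satisfies $\ln\rho_n(\omega)=\sum_{k=0}^{n-1}g(\theta_{k}\omega)$. Lemma~\ref{lm:gamma} gives $\ln\rho_1(\omega)\le\gamma+\int_{0}^{1}c_0^{(+)}(\theta_{\tau}\omega)\,d\tau$, whose right-hand side lies in $L_1(\OFP)$ by (A4)(i) (compare Lemma~\ref{lm:L1}); thus $\ln^{+}\rho_1\in L_1(\OFP)$, while the negative part may fail to be integrable, so $\int_{\Omega}\ln\rho_1\,d\PP\in[-\infty,\infty)$. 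Birkhoff's ergodic theorem, in the form valid when only the positive part is integrable, together with ergodicity then yields $\tfrac1t\ln\rho_t(\omega)\to\int_{\Omega}\ln\rho_1\,d\PP=:\tilde{\lambda}_1$ on a further full-measure invariant set, which I intersect with the previous one.

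Parts~(iii) and~(iv) follow from the same focusing. For $u\in L_2(D)^{+}\setminus\{0\}$, after time $t_0$ the element $U_{\omega}(t_0)u$ lies within fixed Hilbert distance of $w(\theta_{t_0}\omega)$, hence is order-comparable to it with $\omega$\nobreakdash-dependent but $t$\nobreakdash-independent constants; applying the positive operators $U_{\theta_{t_0}\omega}(s)$ and taking monotone norms sandwiches $\norm{U_{\omega}(t)u}$ between fixed multiples of $\rho_t(\omega)$, so $\tfrac1t\ln\norm{U_{\omega}(t)u}\to\tilde{\lambda}_1$. For general $u\in L_2(D)\setminus\{0\}$, positivity gives $\lvert U_{\omega}(t)u\rvert\le U_{\omega}(t)\lvert u\rvert$ a.e., whence $\norm{U_{\omega}(t)u}\le\norm{U_{\omega}(t)\lvert u\rvert}$ and the first inequality of~(iv) from~(iii) applied to $\lvert u\rvert$; the same domination reduces $\norm{U_{\omega}(t)}$ to a supremum over the cone, which the uniform comparison bounds above by $\rho_t(\omega)$ up to an $\omega$\nobreakdash-constant, and the trivial lower bound $\norm{U_{\omega}(t)}\ge\rho_t(\omega)$ then forces the exact rate $\tilde{\lambda}_1$. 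I expect the main obstacle to be securing this focusing under the present weak hypotheses: the cone $L_2(D)^{+}$ has empty interior in $L_2(D)$, so the Hilbert-metric contraction must be run in the smaller smoothing space and transported back, and the \emph{uniform-in-$\omega$} diameter bound must be established while $c_0$ is only integrally controlled by (A4) rather than uniformly bounded; the measurable selection via Proposition~\ref{prop:Pettis} and the $L_1$\nobreakdash-bookkeeping for Birkhoff's theorem are the secondary technical points.
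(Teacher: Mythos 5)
The paper does not actually prove this theorem: it is quoted from the companion work \cite{MiShPart3}, where it is obtained by verifying the hypotheses of the abstract theory of generalized principal Floquet subspaces developed in \cite{MiShPart1}. Your proposal in fact mirrors the outline of that cited argument: construction of $w(\omega)$ as a pullback limit of normalized orbits under a projective-metric contraction, equivariance from the cocycle identity, the multiplicative cocycle $\rho_t$ plus Birkhoff's ergodic theorem (in the one-sided form requiring only $\ln^{+}\rho_1 \in L_1(\OFP)$) for part (ii), and order-comparison plus the domination $\abs{U_{\omega}(t)u} \le U_{\omega}(t)\abs{u}$ for parts (iii)--(iv). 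Your $L_1$ bookkeeping via Lemma~\ref{lm:gamma}, (A4)(i) and the argument of Lemma~\ref{lm:L1} is correct, and so is allowing $\tilde{\lambda}_1 = -\infty$ because only the positive part of $\ln\rho_1$ is integrable.

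The genuine gap is the \emph{uniform focusing} step on which everything else in your plan rests. You posit a $t_0 > 0$ such that the projective diameter of the normalized image of $L_2(D)^{+}\setminus\{0\}$ under $U_{\omega}(t_0)$ is bounded \emph{uniformly in} $\omega$. Under (A0)--(A4) this is simply not available: the Harnack-type constants that control this diameter depend on pointwise bounds for the zero-order coefficient over the time interval, and (A4) controls $c_0$ only integrally --- $\sup_{\omega}\int_0^1 \bigl(c_0^{(+)}(\theta_t\omega) - c_0^{(-)}(\theta_t\omega)\bigr)\,dt$ may well be infinite. This is not a patchable technicality but the crux of the cited papers: there, uniformity is replaced by $\omega$-dependent focusing constants whose logarithms are $\PP$-integrable (tempered), and assumption (A4)(ii) --- integrability of $\ln^{+}\int_0^1\bigl(c_0^{(+)} - c_0^{(-)}\bigr)(\theta_t\omega)\,dt$ --- exists precisely to guarantee this; your proposal never invokes (A4)(ii) at all, which is a sign the hypothesis is not being used where it must be. With $\omega$-dependent constants, your Cauchy-net construction of $w$, the sandwich in (iii), and the upper bound on $\norm{U_{\omega}(t)}$ in (iv) can still be pushed through, but each then requires the additional step that these constants grow subexponentially along orbits (obtained by applying the ergodic theorem to their logarithms). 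As written --- acknowledging the obstacle in the final sentence but asserting uniformity in the construction --- the proof does not close.
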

$\tilde{\lambda}_1$ is referred to as the {\em generalized principal Lyapunov exponent\/} of~$\Phi$ (or of~\eqref{main-eq}+\break\eqref{main-bc}).

\section{Integral formula for generalized principal Lyapunov exponent}
\label{sect:integral}
In this section we give a representation of the generalized Lyapunov exponent as the integral over $\Omega$ of some function connected with the Dirichlet form.

We assume that (A0) through (A4) are satisfied.  Recall that, in~particular, $(\OFP,(\theta_t)_{t \in \RR})$ is an ergodic metric flow.

Let $V$ denote the Banach space as in~\cite[Sect.\ 3]{MiShPart3} (in the Dirichlet or Neumann cases $V$ is a closed subspace of the Sobolev space $W^{1}_{2}(D)$).

For $\omega \in \Omega$ the {\em Dirichlet form\/} $B_{\omega} = B_{\omega}(\cdot,\cdot)$ is a  bilinear form on $V$ defined as (we use summation convention)
\begin{multline}
\label{measurable-bilinear-form-DC}
B_{\omega}(u,v) \\
:= \int_D \bigl( (a_{ij}(\omega,x)\partial_{j} u + a_i(\omega,x)u) \partial_{i}v - (b_i(\omega,x) \partial_{i} u + c_0(\omega,x) u) v \bigr) \,dx, \quad u,v \in V,
\end{multline}
in the Dirichlet and Neumann boundary condition cases, and
\begin{align}
\label{measurable-bilinear-form-RC}
B_{\omega}(u,v) &:= \int_D \bigl( (a_{ij}(\omega,x) \partial_{j} u + a_i(\omega,x) u) \partial_{i} v - (b_i(\omega,x) \partial_{i} u + c_0(\omega,x)u) v \bigr) \,dx
\nonumber \\
&\quad + \int_{\partial D} d_0(\omega,x) u v \,dH_{N-1}, \quad u,v \in V,
\end{align}
in the Robin boundary condition case ($H_{N-1}$ denotes $(N-1)$\nobreakdash-\hspace{0pt}dimensional Hausdorff measure, which is, under (A0), the same as surface measure).

From the fact that any solution is classical it follows that $w(\omega)$ in Theorem~\ref{thm:floquet} belongs to $C^{1}(\bar{D})$, hence the function $\kappa \colon \Omega \to \RR$,
\begin{equation*}
\kappa(\omega) := - B_{\omega}(w(\omega), w(\omega)),
\end{equation*}
is well defined.

The following is the main result of this section.
\begin{theorem}
\label{thm:kappa}
\begin{equation*}
\tilde{\lambda}_1 = \int_{\Omega} \kappa(\omega) \, d\PP(\omega).
\end{equation*}
\end{theorem}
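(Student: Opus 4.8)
The plan is to exploit the variational (weak) formulation of \eqref{main-eq}+\eqref{main-bc}, whose whole purpose is that the Dirichlet form $B_{\omega}$ is \emph{the} form associated with the evolution: for the weak solution $u(t) := U_{\omega}(t) w(\omega)$ one has, in the sense of $V'$,
\[
\frac{d}{dt}\langle u(t), v\rangle = -B_{\theta_{t}\omega}(u(t), v), \qquad v \in V,\ t > 0,
\]
which is exactly the way $B_{\omega}$ enters the construction in \cite{MiShPart3} (in the Robin case the boundary integral in \eqref{measurable-bilinear-form-RC} is what absorbs the boundary condition). First I would take $v = u(t)$. Since every solution is classical, $u(\cdot)$ has ample regularity on each $[\varepsilon, T]$ with $\varepsilon > 0$ for the Lions energy identity $\langle \p_t u, u\rangle = \tfrac12 \frac{d}{dt}\norm{u}^2$ to hold, giving
\[
\tfrac12 \frac{d}{dt}\norm{u(t)}^2 = -B_{\theta_{t}\omega}(u(t), u(t)), \qquad t > 0.
\]

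Next I would insert the normalization. By Theorem~\ref{thm:floquet}(i), $u(t) = \rho_t(\omega)\, w(\theta_{t}\omega)$ with $\rho_t(\omega) = \norm{u(t)}$, so bilinearity of $B_{\theta_{t}\omega}$ together with the definition of $\kappa$ yields $B_{\theta_{t}\omega}(u(t), u(t)) = \rho_t(\omega)^2 B_{\theta_{t}\omega}(w(\theta_{t}\omega), w(\theta_{t}\omega)) = -\rho_t(\omega)^2\, \kappa(\theta_{t}\omega)$. Since $\norm{u(t)}^2 = \rho_t(\omega)^2$ and $\rho_t(\omega) > 0$, the displayed identity reduces to $\frac{d}{dt}\ln\rho_t(\omega) = \kappa(\theta_{t}\omega)$ for $t > 0$. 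Using $\rho_0(\omega) = \norm{w(\omega)} = 1$ and the continuity of $\rho_t$ at $t = 0$, I would integrate from $0$ to $1$ to obtain the pointwise key identity
\[
\ln\rho_1(\omega) = \int_0^1 \kappa(\theta_{s}\omega)\, ds, \qquad \omega \in \tilde\Omega_0.
\]

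Before combining this with Theorem~\ref{thm:floquet}(ii) I must secure integrability. Expanding $\kappa(\omega) = -B_{\omega}(w(\omega), w(\omega))$ and using the ellipticity (A3) to bound $-\int_D a_{ij}\p_j w\, \p_i w\, dx \le -\alpha_0 \norm{\nabla w}^2$, the uniform bounds (A2) on $a_i, b_i$ to estimate the mixed terms by $C\norm{\nabla w}$ (recalling $\norm{w} = 1$), and the elementary inequality $\int_D c_0 w^2\, dx \le c_0^{(+)}(\omega)$, I obtain an upper estimate of the form $\kappa(\omega) \le C' + c_0^{(+)}(\omega)$, the quadratic $-\alpha_0 t^2 + Ct$ in $\norm{\nabla w}$ being bounded above. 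By Lemma~\ref{lm:L1}, $c_0^{(+)} \in L_1(\OFP)$, hence $\kappa^{+} \in L_1(\OFP)$ and $\int_{\Omega} \kappa\, d\PP$ is well defined in $[-\infty, \infty)$.

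Finally I would integrate the key identity over $\Omega$, invoke Theorem~\ref{thm:floquet}(ii) in the form $\tilde\lambda_1 = \int_{\Omega} \ln\rho_1\, d\PP$, and swap the order of integration (Tonelli applied to $\kappa^{-}\ge 0$, and Fubini to $\kappa^{+} \in L_1(\OFP)$, with the joint measurability of $(\omega,s)\mapsto\kappa(\theta_{s}\omega)$ established as in the proof of Lemma~\ref{lm:L1}). Since each $\theta_{s}$ preserves $\PP$, $\int_{\Omega} \kappa(\theta_{s}\omega)\, d\PP = \int_{\Omega} \kappa\, d\PP$ independently of $s$, so the integral over $s \in [0,1]$ collapses and gives $\tilde\lambda_1 = \int_{\Omega} \kappa\, d\PP$. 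The main obstacle is the rigorous justification of the energy identity and of the differentiation of $\ln\rho_t$ — that the (classical) weak solution genuinely satisfies $\tfrac12\frac{d}{dt}\norm{u}^2 = -B_{\theta_{t}\omega}(u,u)$, including the endpoint behavior at $t=0$ — together with pinning down the integrability of $\kappa$ against the merely integral-type control available for the unbounded zero-order term $c_0$.
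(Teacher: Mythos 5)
Your core computation is exactly the paper's: the energy identity for the weak solution combined with the normalization $U_{\omega}(t)w(\omega)=\rho_t(\omega)\,w(\theta_{t}\omega)$ gives $\ln\rho_t(\omega)=\int_0^t\kappa(\theta_{s}\omega)\,ds$, which is the paper's Lemma~\ref{lm:Lyapunov-integral-kappa} (obtained there from the integrated form of the same identity, cited from \cite{MiSh3}). Your two deviations are both sound, and arguably cleaner. First, your direct bound $\kappa(\omega)\le C'+c_0^{(+)}(\omega)$ via ellipticity (A3), the uniform bounds (A2), and the sign $-\int_{\p D}d_0\,w^2\,dH_{N-1}\le 0$ in the Robin case, replaces the paper's derivation of the very same inequality from Lemma~\ref{lm:gamma} together with \eqref{eq:kappa-2}; both then conclude $\kappa^{+}\in L_1\OFP$ via Lemma~\ref{lm:L1}. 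Second, where the paper applies the Birkhoff ergodic theorem to $\kappa$ and matches the time average against the limit clause of Theorem~\ref{thm:floquet}(ii), you instead integrate the identity at $t=1$, use the integral clause $\tilde{\lambda}_1=\int_{\Omega}\ln\rho_1\,d\PP$ of the same theorem, and collapse the $s$-integral by Fubini--Tonelli and invariance of $\PP$; this is a correct and somewhat more elementary finish.

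There is, however, a genuine gap: you never establish that $\kappa$ is $(\mathfrak{F},\mathfrak{B}(\RR))$-measurable, and without that neither $\int_{\Omega}\kappa\,d\PP$ nor your Fubini--Tonelli step is meaningful. Your one remark on this point---that the joint measurability of $(\omega,s)\mapsto\kappa(\theta_{s}\omega)$ is ``established as in the proof of Lemma~\ref{lm:L1}''---does not close it: the Carath\'eodory-type lemma used there manufactures \emph{joint} measurability out of measurability in $\omega$ for fixed $s$ plus continuity in $s$, so measurability of $\omega\mapsto\kappa(\omega)$ is an input to that argument, not an output. Nor is it routine in this setting: Theorem~\ref{thm:floquet} gives $w$ only as an $(\mathfrak{F},\mathfrak{B}(L_2(D)))$-measurable map, whereas $\kappa(\omega)=-B_{\omega}(w(\omega),w(\omega))$ involves $\nabla w(\omega)$ (and, in the Robin case, boundary traces), so one must upgrade $w$ to a measurable map into $C^{1}(\bar{D})$. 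This is precisely what the paper spends Lemmas~\ref{lm:measurable-C3-2}--\ref{lm:kappa-measurability-C1} on: Pettis-type measurability of the coefficients (in particular of $c_0^{\omega}$ in $C^{3,2}$), a decomposition $\tilde{\Omega}_0=\bigcup_M\hat{\Omega}_M$ on which the coefficient data lie in an Ascoli--Arzel\`a-compact set $\hat{Y}_M$, and continuity of the solution map $(\hat{a},u_0)\mapsto\hat{u}(0;\hat{a},u_0)$ into $C^{1}(\bar{D})$, so that $w|_{\hat{\Omega}_M}$ is a composition of a measurable map with a continuous one. Your proof needs this block (or an equivalent measurability argument) to be complete; everything else in it stands.
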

In the case of bounded zero-order terms an analog of Theorem~\ref{thm:kappa} was proved in~\cite{MiSh3} (cf.\ \cite[Thm.\ 3.5.3]{MiSh3}).  For analogs of the formula for other types of equations, see the survey paper~\cite{Mi-NDS}.

Before giving the proof of Theorem~\ref{thm:kappa} we formulate and prove a couple of auxiliary results.

\begin{lemma}
\label{lm:w-and-kappa-continuity}
For any $\omega \in \tilde{\Omega}_0$ the mappings
\begin{equation*}
\bigl[ \, (-\infty,\infty) \ni t \mapsto w(\theta_{t}\omega) \in C^1(\bar{D}) \, \bigr] \quad \text{and} \quad \bigl[ \, (-\infty, \infty) \ni t \mapsto \kappa(\theta_{t}\omega) \in \RR \, \bigr]
\end{equation*}
are continuous.
\end{lemma}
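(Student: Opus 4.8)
The plan is to prove continuity of both maps by exploiting the fact that $w(\theta_t\omega)$ is, by Theorem~\ref{thm:floquet}(i), the normalized solution operator applied to $w(\omega)$, and to leverage the smoothing and continuous-dependence properties of the parabolic flow $U_\omega$. First I would establish continuity of $[\,t \mapsto w(\theta_t\omega)\,]$ as a map into $C^1(\bar D)$. For $t \ge 0$ this follows from the identity $w(\theta_t\omega) = U_\omega(t)w(\omega)/\norm{U_\omega(t)w(\omega)}$ together with the parabolic regularity theory invoked after (A3): since every solution is classical and the coefficients satisfy the uniform $C^{2+\alpha,\cdot}$ bounds in (A2), the map $t \mapsto U_\omega(t)w(\omega)$ is continuous into $C^1(\bar D)$ on $(0,\infty)$, and the denominator $\rho_t(\omega) = \norm{U_\omega(t)w(\omega)}$ is continuous and bounded away from zero (by Theorem~\ref{thm:floquet}(ii), $\rho_t(\omega) > 0$). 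To handle negative $t$, I would use the cocycle/equivariance relation: for $s < 0$, writing $w(\theta_s\omega)$ and applying $U_{\theta_s\omega}(-s)$ recovers $w(\omega)$ up to normalization, so continuity at negative times reduces to continuity of the same flow started from $\theta_s\omega$ over a short forward time, again governed by the classical parabolic estimates.

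Once continuity of $[\,t \mapsto w(\theta_t\omega)\,]$ into $C^1(\bar D)$ is in hand, continuity of $[\,t \mapsto \kappa(\theta_t\omega)\,]$ follows from the structure of the Dirichlet form. By definition $\kappa(\theta_t\omega) = -B_{\theta_t\omega}(w(\theta_t\omega), w(\theta_t\omega))$, so I would show the map $t \mapsto B_{\theta_t\omega}(w(\theta_t\omega), w(\theta_t\omega))$ is continuous by treating the coefficient dependence and the argument dependence separately. The coefficients $a_{ij}^\omega, a_i^\omega, b_i^\omega$ (and $d_0^\omega$ in the Robin case) depend continuously on $t$ in the relevant $C(\bar D)$ (resp.\ $C(\p D)$) topology by (A2)(i)--(iii), and $c_0^\omega(t,\cdot)$ depends continuously in $C(\bar D)$ by (A2)(iv) (as already observed in the proof of Lemma~\ref{lm:c-zero-pm}(ii)). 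Since $w(\theta_t\omega)$ and its first derivatives vary continuously in the sup-norm by the first part, each integrand in \eqref{measurable-bilinear-form-DC} (resp.\ \eqref{measurable-bilinear-form-RC}) is a sum of products of factors that are jointly continuous in $t$ and uniformly bounded on $\bar D$ (resp.\ $\p D$); bounded convergence over the fixed finite-measure domain $D$ then yields continuity of the integrals, hence of $\kappa(\theta_t\omega)$.

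The main obstacle I anticipate is the regularity bookkeeping at the level of $C^1(\bar D)$ rather than merely $L_2(D)$: I must ensure that the solution map $t \mapsto U_\omega(t)w(\omega)$ is continuous in the $C^1(\bar D)$ topology (not just $L_2$), and that this continuity is uniform enough near $t = 0$ to cover the two-sided statement via the cocycle identity. This is where the uniform-in-$\omega$ parabolic Schauder-type estimates from (A2), combined with the instantaneous smoothing that makes every weak solution classical on $(s,\infty)\times D$, do the real work; I would cite the classical-solution fact recorded after (A3) (see \cite[Prop.\ 2.5.1]{MiSh3}) and the continuous-dependence estimates underlying the construction of $\Phi$ in \cite{MiShPart3}. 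Everything else—the positivity of $\rho_t$, the continuity of the coefficients in $t$, and the passage to the limit under the integral sign—is routine once the $C^1$-continuity of $t \mapsto w(\theta_t\omega)$ is secured.
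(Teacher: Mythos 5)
Your proposal is correct and follows essentially the same route as the paper: both rest on the equivariance identity of Theorem~\ref{thm:floquet}(i) combined with parabolic regularity in the spirit of \cite[Prop.\ 2.5.1]{MiSh3} to get $C^1(\bar{D})$-continuity of the normalized solution away from the initial time, with continuity of $\kappa$ then following from (A2) and the structure of the Dirichlet form. The paper merely packages the base-point shift more cleanly---writing $w(\theta_{t}\omega)$ for $t \in [-1,1]$ as the normalized value of $U_{\theta_{-2}\omega}(t+2)w(\theta_{-2}\omega)$, so that all relevant times lie in $[1,3]$, bounded away from the initial instant---which disposes in one stroke of the delicate point at $t=0$ that you handle via your cocycle reduction.
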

\begin{proof}
Fix $\omega \in \tilde{\Omega}_0$.  By Theorem~\ref{thm:floquet}(i), we have
\begin{equation*}
w(\theta_{t}\omega) = \frac{U_{\theta_{-2}\omega}(t + 2) w(\theta_{-2}\omega)}{\norm{U_{\theta_{-2}\omega}(t + 2) w(\theta_{-2}\omega)}} \quad \text{for } -1 \le t \le 1.
\end{equation*}
Proceeding along the lines of the proof of  \cite[Proposition~2.5.1]{MiSh3} one obtains that the mapping $\bigl[\, [-1, 1] \ni t \mapsto U_{\theta_{-2}\omega}(t + 2) w(\theta_{-2}\omega) \in C^{2 +\alpha}(\bar{D}) \,\bigr]$ is continuous, so {\em a~fortiori\/} that mapping with $C^{2 +\alpha}(\bar{D})$ replaced by $C^{1}(\bar{D})$.  We have thus proved that the restriction of the  mapping $\bigl[ t \mapsto w(\theta_{t}\omega) \, \bigr]$ to $[-1, 1]$ is continuous.  Since $\omega \in \tilde{\Omega}_0$ is arbitrary, the mapping is continuous on its whole domain $(-\infty,\infty)$.

The continuity of the mapping $\bigl[ t \mapsto \kappa(\theta_{t}\omega) \, \bigr]$ is a consequence of the continuity of the first mapping and (A2).
\end{proof}

\begin{lemma}
\label{lm:Lyapunov-integral-kappa}
For any $\omega \in \tilde{\Omega}_0$ there holds
\begin{enumerate}
\item[\textup{(i)}]
\begin{equation}
\label{eq:kappa-1}
\left. \frac{d}{d\tau}\norm{U_{\omega}(\tau) w(\omega)} \right|_{\tau = t} = \kappa(\theta_{t}\omega) \, \norm{U_{\omega}(t) w(\omega)}
\end{equation}
for all $t \in \RR$;
\item[\textup{(ii)}]
\begin{equation}
\label{eq:kappa-2}
\ln{\norm{U_{\omega}(t) w(\omega)}} = \int\limits_{0}^{t} \kappa(\theta_{\tau}\omega) \, d\tau,
\end{equation}
for all $t > 0$.
\end{enumerate}
\end{lemma}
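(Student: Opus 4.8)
The plan is to obtain (i) from an energy identity for the curve $v(\tau):=U_{\omega}(\tau)w(\omega)$ together with Theorem~\ref{thm:floquet}(i), and then to deduce (ii) by integrating (i) and using $\norm{w(\omega)}=1$.

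First I would work on $\tau>0$, where $v(\tau)$ is a classical solution of \eqref{main-eq}, so that $\tau\mapsto v(\tau)$ is differentiable into $L_{2}(D)$ and $\norm{v(\tau)}=\rho_{\tau}(\omega)>0$. By the chain rule $\tfrac{1}{2}\,\tfrac{d}{d\tau}\norm{v(\tau)}^{2}=\langle\p_{\tau}v(\tau),v(\tau)\rangle$; multiplying \eqref{main-eq} by $v(\tau)$, integrating over $D$ and integrating by parts (using that $v(\tau)$ satisfies the boundary condition) converts the right-hand side into $-B_{\theta_{\tau}\omega}(v(\tau),v(\tau))$, which is the energy identity $\tfrac{1}{2}\,\tfrac{d}{d\tau}\norm{v(\tau)}^{2}=-B_{\theta_{\tau}\omega}(v(\tau),v(\tau))$. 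Next I would substitute $v(\tau)=\rho_{\tau}(\omega)\,w(\theta_{\tau}\omega)$ from Theorem~\ref{thm:floquet}(i) and use bilinearity to pull out $\rho_{\tau}(\omega)^{2}$, so that $B_{\theta_{\tau}\omega}(v(\tau),v(\tau))=\rho_{\tau}(\omega)^{2}B_{\theta_{\tau}\omega}(w(\theta_{\tau}\omega),w(\theta_{\tau}\omega))=-\rho_{\tau}(\omega)^{2}\kappa(\theta_{\tau}\omega)$. Since $\tfrac{1}{2}\,\tfrac{d}{d\tau}\norm{v(\tau)}^{2}=\rho_{\tau}(\omega)\,\rho_{\tau}'(\omega)$, comparing the two expressions and cancelling one factor of $\rho_{\tau}(\omega)>0$ yields $\rho_{\tau}'(\omega)=\kappa(\theta_{\tau}\omega)\,\rho_{\tau}(\omega)$, i.e.\ (i) for $\tau>0$.

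To reach all $t\in\RR$ I would use the backward shift, exactly as in Lemma~\ref{lm:w-and-kappa-continuity}. The semiflow cocycle and Theorem~\ref{thm:floquet}(i) give the multiplicative law $\rho_{a+b}(\eta)=\rho_{b}(\eta)\,\rho_{a}(\theta_{b}\eta)$ for $a,b\ge0$, which extends $\rho_{\cdot}(\omega)$ consistently to all real times (equivalently $\rho_{-t}(\omega):=1/\rho_{t}(\theta_{-t}\omega)$). Fixing $t$ and choosing $s>-t$, for $\sigma$ near $t$ one has $\rho_{\sigma}(\omega)=\rho_{\sigma+s}(\theta_{-s}\omega)/\rho_{s}(\theta_{-s}\omega)$; the denominator is a positive constant and the numerator is differentiable in $\sigma$ by the case already treated, applied at the base point $\theta_{-s}\omega\in\tilde{\Omega}_{0}$. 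Differentiating and using $\theta_{\sigma+s}\theta_{-s}\omega=\theta_{\sigma}\omega$ gives (i) at $\tau=t$. Finally, dividing (i) by $\rho_{t}(\omega)$ gives $\tfrac{d}{dt}\ln\rho_{t}(\omega)=\kappa(\theta_{t}\omega)$; integrating from $0$ to $t$ (the integrand is continuous, hence integrable, by Lemma~\ref{lm:w-and-kappa-continuity}) and noting $\rho_{0}(\omega)=\norm{U_{\omega}(0)w(\omega)}=\norm{w(\omega)}=1$ yields (ii).

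The main obstacle is the rigorous justification of the energy identity in the second step: that $\tau\mapsto\norm{U_{\omega}(\tau)w(\omega)}^{2}$ is genuinely differentiable and that the solution itself is an admissible test function, so that the integration by parts produces exactly $-B_{\theta_{\tau}\omega}(v(\tau),v(\tau))$ with the correct boundary contribution. This rests on the classical (in particular $x$-smooth) regularity of $v(\tau)$ for $\tau>0$, which guarantees $v(\tau)\in V$ and that it satisfies the boundary condition pointwise, together with the $C^{1}(\bar{D})$-continuity of $\tau\mapsto w(\theta_{\tau}\omega)$ from Lemma~\ref{lm:w-and-kappa-continuity} providing the needed $\tau$-regularity; once this is in place, the extension to $\tau\le0$ is routine.
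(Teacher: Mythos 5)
Your proposal is correct and follows in substance the same route as the paper: the Dirichlet-form energy identity for $v(\tau)=U_{\omega}(\tau)w(\omega)$, the factorization $v(\tau)=\rho_{\tau}(\omega)\,w(\theta_{\tau}\omega)$ from Theorem~\ref{thm:floquet}(i) together with bilinearity to produce $\kappa(\theta_{\tau}\omega)$, and integration plus $\rho_{0}(\omega)=1$ for part (ii). The one substantive difference lies exactly at what you call the main obstacle. You take the differential energy identity $\tfrac{1}{2}\tfrac{d}{d\tau}\norm{v(\tau)}^{2}=-B_{\theta_{\tau}\omega}(v(\tau),v(\tau))$ as your starting point, which forces you to justify a priori that $\tau\mapsto\norm{v(\tau)}^{2}$ is differentiable and that $v(\tau)$ is an admissible test function. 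The paper runs the argument in the opposite order: it quotes the integrated identity $\norm{U_{\omega}(t)w(\omega)}^{2}-\norm{U_{\omega}(s)w(\omega)}^{2}=-2\int_{s}^{t}B_{\theta_{\tau}\omega}(v(\tau),v(\tau))\,d\tau$ from \cite[Proposition~2.1.4]{MiSh3}, which is valid at the level of weak solutions and so requires no a priori differentiability; it then rewrites the integrand as $-2\kappa(\theta_{\tau}\omega)\norm{v(\tau)}^{2}$, observes that this integrand is continuous in $\tau$ by Lemma~\ref{lm:w-and-kappa-continuity}, and obtains differentiability and \eqref{eq:kappa-1} \emph{a posteriori} from the fundamental theorem of calculus. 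So the obstacle you flag is genuine but is best dissolved rather than overcome: start from the integral form and differentiate, instead of starting from the differential form and justifying the chain rule and integration by parts directly. Your explicit cocycle argument extending (i) to negative $t$ (via $\rho_{\sigma}(\omega)=\rho_{\sigma+s}(\theta_{-s}\omega)/\rho_{s}(\theta_{-s}\omega)$ and invariance of $\tilde{\Omega}_{0}$) is a point where you are more detailed than the paper, which leaves the case $t<0$ implicit in ``standard calculus''; that part of your write-up is sound.
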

\begin{proof}
It follows from \cite[Proposition~2.1.4]{MiSh3} and the definition of $\kappa$ that
\begin{equation*}
\norm{U_{\omega}(t) w(\omega)}^2 - \norm{U_{\omega}(s) w(\omega)}^2 = - 2 \int\limits_{s}^{t} \kappa(\theta_{\tau}\omega) \norm{U_{\omega}(\tau) w(\omega)}^2 \, d\tau
\end{equation*}
for any $0 \le s < t$.  As, by Lemma~\ref{lm:w-and-kappa-continuity}, the integrand on the right\nobreakdash-\hspace{0pt}hand side above is continuous in $\tau$, the statement (i) follows by standard calculus (for a similar reasoning, see \cite[Lemma 3.5.3]{MiSh3}).  Part (ii) is straightforward.
\end{proof}

\begin{lemma}
\label{lm:measurable-C3-2}
For each $T > 0$ the mapping
\begin{equation*}
\left[\, \Omega \ni \omega \mapsto c^{\omega}_0|_{[0, T] \times \bar{D}} \in C^{3,2}([0, T] \times \bar{D}) \,\right]
\end{equation*}
is $(\mathfrak{F}, \mathfrak{B}(C^{3,2}([0, T] \times \bar{D})))$-measurable.
\end{lemma}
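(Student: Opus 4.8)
The plan is to reduce the assertion, by embedding $C^{3,2}([0,T]\times\bar{D})$ into a finite product of spaces of continuous functions and applying Proposition~\ref{prop:Pettis}, to the $(\mathfrak{F},\mathfrak{B}(\RR))$-measurability of the individual pointwise partial derivatives, which I would in turn establish through a difference-quotient argument. First I would record two elementary facts. Since $c_0$ is $(\mathfrak{F}\otimes\mathfrak{B}(D),\mathfrak{B}(\RR))$-measurable, each section $[\,\omega\mapsto c_0(\omega,y)\,]$ (with $y\in D$ fixed) is $(\mathfrak{F},\mathfrak{B}(\RR))$-measurable, exactly as used in the proof of Lemma~\ref{lm:c-zero-pm}; and since $\theta$ is $(\mathfrak{B}(\RR)\otimes\mathfrak{F},\mathfrak{F})$-measurable, each time-$s$ map $[\,\omega\mapsto\theta_{s}\omega\,]$ is $(\mathfrak{F},\mathfrak{F})$-measurable. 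Composing these, $[\,\omega\mapsto c_0^{\omega}(s,y)\,]=[\,\omega\mapsto c_0(\theta_{s}\omega,y)\,]$ is $(\mathfrak{F},\mathfrak{B}(\RR))$-measurable for every fixed $(s,y)\in\RR\times D$.

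Next comes the reduction. The topology of $C^{3,2}([0,T]\times\bar{D})$ is that of uniform convergence of each of the finitely many admitted partial derivatives $\p_{t}^{i}\p_{x}^{\beta}$, so the linear map $\Psi$ sending $u$ to the tuple $(\p_{t}^{i}\p_{x}^{\beta}u)_{i,\beta}$ is a homeomorphism of $C^{3,2}([0,T]\times\bar{D})$ onto its image in the finite (hence separable) product of copies of $C([0,T]\times\bar{D})$. As all the spaces in sight are separable metric, the Borel $\sigma$-algebra $\mathfrak{B}(C^{3,2}([0,T]\times\bar{D}))$ is generated by the component maps $[\,u\mapsto\p_{t}^{i}\p_{x}^{\beta}u\,]$, and so the map in the statement is $(\mathfrak{F},\mathfrak{B}(C^{3,2}([0,T]\times\bar{D})))$-measurable as soon as each map $[\,\Omega\ni\omega\mapsto\p_{t}^{i}\p_{x}^{\beta}(c_0^{\omega}|_{[0,T]\times\bar{D}})\in C([0,T]\times\bar{D})\,]$ is $(\mathfrak{F},\mathfrak{B}(C([0,T]\times\bar{D})))$-measurable. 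Since $[0,T]\times\bar{D}$ is compact metrizable, Proposition~\ref{prop:Pettis} applies to each of these, and it suffices to check, for every fixed $(t,x)\in[0,T]\times\bar{D}$, that $[\,\omega\mapsto(\p_{t}^{i}\p_{x}^{\beta}c_0^{\omega})(t,x)\,]$ is $(\mathfrak{F},\mathfrak{B}(\RR))$-measurable.

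Finally I would verify this pointwise measurability by difference quotients. Fix $(t,x)$ in the interior of $[0,T]\times\bar{D}$. By (A2)(iv) the function $c_0^{\omega}$ is $C^{3,2}$ near $(t,x)$, hence its mixed partials up to the relevant orders exist and are continuous, so $\p_{t}^{i}\p_{x}^{\beta}c_0^{\omega}(t,x)$ equals the limit as $n\to\infty$ of the iterated difference quotients formed from the values of $c_0^{\omega}$ at grid points $(t\pm p/n,\,x\pm q/n)$. Each such difference quotient is a finite linear combination of values $c_0^{\omega}(s,y)=c_0(\theta_{s}\omega,y)$, which are $(\mathfrak{F},\mathfrak{B}(\RR))$-measurable in $\omega$ by the first step; since a pointwise limit of measurable functions is measurable, the derivative is measurable in $\omega$ at every interior $(t,x)$. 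For boundary points one uses one-sided difference quotients, or approximates by interior points and passes to the limit using continuity of the derivative (again (A2)(iv)). The main obstacle is exactly this last step: one must invoke the a priori regularity (A2)(iv) to guarantee that the difference quotients actually converge to the classical derivative, thereby exhibiting the derivative as a genuine pointwise limit of the manifestly measurable quotients. The reduction via $\Psi$ and Pettis' theorem is then routine.
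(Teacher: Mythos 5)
Your proposal is correct and follows essentially the same route as the paper's (sketched) proof: reduce, via Proposition~\ref{prop:Pettis}, the $C([0,T]\times\bar{D})$-valued measurability of each partial derivative to pointwise measurability in $\omega$, and obtain the latter by exhibiting the derivatives as pointwise limits of difference quotients built from the manifestly measurable values $c_0(\theta_s\omega,y)$, with (A2)(iv) guaranteeing convergence. Your write-up is in fact more complete than the paper's ``Indication of proof'': you spell out the reduction from $C^{3,2}$-valued to componentwise $C$-valued measurability and the treatment of boundary points, both of which the paper leaves implicit.
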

\begin{proof}[Indication of proof]
We give only the first step of the proof.  Namely, we prove that the mapping
\begin{equation*}
\left[\, \Omega \ni \omega \mapsto \frac{\partial c^{\omega}_0}{\partial t}\biggr|_{[0, T] \times \bar{D}} \in C([0, T] \times \bar{D}) \,\right]
\end{equation*}
is $(\mathfrak{F}, \mathfrak{B}(C([0, T] \times \bar{D})))$-measurable.  In view of Proposition~\ref{prop:Pettis} this is equivalent to showing that for $t \in [0,T]$ and $x \in \bar{D}$ fixed the mapping
\begin{equation*}
\left[\, \Omega \ni \omega \mapsto \frac{\partial c^{\omega}_0}{\partial t}(t,x)  \in \RR \,\right]
\end{equation*}
is $(\mathfrak{F}, \mathfrak{B}(\RR))$-measurable, which follows in turn from the fact that, for each $n \in \NN$,
\begin{equation*}
\left[\, \Omega \ni \omega \mapsto \frac{c^{\omega}_0(t+\frac{1}{n},x) - c^{\omega}_0(t,x)}{\frac{1}{n}} \in \RR \,\right]
\end{equation*}
is $(\mathfrak{F}, \mathfrak{B}(\RR))$-measurable and that $\frac{\partial c^{\omega}_0}{\partial t}(t,x) =\lim\limits_{n \to \infty} \frac{c^{\omega}_0(t+\frac{1}{n},x) - c^{\omega}_0(t,x)}{\frac{1}{n}}.$

We apply the above reasoning now to the derivatives of $c^{\omega}_0$ in $t$ and $x_i$, of suitable orders.
\end{proof}

\begin{lemma}
\label{lm:w-measurability-C1}
The mapping $w \colon \tilde{\Omega}_0 \to C^{1}(\bar{D})$ is $(\mathfrak{F}, \mathfrak{B}(C^{1}(\bar{D})))$-measurable.
\end{lemma}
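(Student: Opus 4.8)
The plan is to bootstrap the $L_2(D)$-measurability of $w$ provided by Theorem~\ref{thm:floquet} up to $C^1(\bar D)$-measurability, exploiting the fact that the parabolic semiflow smooths $L_2(D)$ data into classical functions over one unit of time. The identity that makes this work is obtained by applying Theorem~\ref{thm:floquet}(i) with $\omega$ replaced by $\theta_{-1}\omega$ (which again lies in $\tilde{\Omega}_0$, since $\tilde{\Omega}_0$ is invariant) and $t = 1$:
\begin{equation*}
w(\omega) = \frac{U_{\theta_{-1}\omega}(1)\,w(\theta_{-1}\omega)}{\norm{U_{\theta_{-1}\omega}(1)\,w(\theta_{-1}\omega)}}, \qquad \omega \in \tilde{\Omega}_0 .
\end{equation*}
Write $N(\omega) := U_{\theta_{-1}\omega}(1)\,w(\theta_{-1}\omega)$. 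The denominator is exactly $\rho_1(\theta_{-1}\omega)$ in the notation of Theorem~\ref{thm:floquet}(ii), hence strictly positive, and it is $(\mathfrak{F},\mathfrak{B}(\RR))$-measurable because $\norm{\cdot}$ is continuous and $\omega \mapsto N(\omega) \in L_2(D)$ is $\mathfrak{F}$-measurable (the latter from the measurable skew-product structure of $\Phi$ together with the $\mathfrak{F}$-measurability of $\theta_{-1}$ and of $w$). Since multiplication by a measurable scalar is a continuous operation on $C^1(\bar D)$, it suffices to prove that $\omega \mapsto N(\omega) \in C^1(\bar D)$ is $(\mathfrak{F},\mathfrak{B}(C^1(\bar D)))$-measurable.

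For this I would exhibit $N$ as a continuous function of measurable data. Collect the coefficients of \eqref{main-eq}$_{\theta_{-1}\omega}$+\eqref{main-bc}$_{\theta_{-1}\omega}$, restricted to $[0,1] \times \bar D$, into a single element $\mathbf{a}(\theta_{-1}\omega)$ of a separable space $\mathcal{C}$ that is a product of integer-order spaces of the type $C^{k,k}([0,1] \times \bar D)$ appearing in Lemma~\ref{lm:measurable-C3-2} (this choice of a separable target, rather than a Hölder space, is exactly why that lemma is phrased as it is). Let $\mathcal{S}(\mathbf{a}, u_0) \in C^1(\bar D)$ denote the value at $t = 1$ of the solution of the corresponding problem with initial datum $u_0 \in L_2(D)$, so that $N(\omega) = \mathcal{S}(\mathbf{a}(\theta_{-1}\omega), w(\theta_{-1}\omega))$. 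The map $\omega \mapsto \mathbf{a}(\theta_{-1}\omega) \in \mathcal{C}$ is $\mathfrak{F}$-measurable: for the $c_0$-component this is Lemma~\ref{lm:measurable-C3-2}, and the remaining (uniformly bounded) components are handled by the same Pettis-type reduction (Proposition~\ref{prop:Pettis}) to pointwise and pointwise-derivative evaluations, which are measurable by (A1) and the joint measurability of the flow; precomposition with the $\mathfrak{F}$-measurable, $\PP$-preserving $\theta_{-1}$ preserves this. Likewise $\omega \mapsto w(\theta_{-1}\omega) \in L_2(D)$ is $\mathfrak{F}$-measurable. Hence, once $\mathcal{S}$ is shown to be continuous into $C^1(\bar D)$, $N$ is the composition of a continuous map with an $\mathfrak{F}$-measurable one, and is therefore $\mathfrak{F}$-measurable into $C^1(\bar D)$, finishing the proof.

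The main obstacle is precisely the continuity of $\mathcal{S}$ into $C^1(\bar D)$, and here the evaluation point $t = 1 > 0$ is essential: no $C^1$ control is available at $t = 0$ from a merely $L_2(D)$ datum, but the analytic smoothing of the problem gains regularity instantaneously, so that on $[\varepsilon, 1]$ interior Schauder estimates furnish $C^{2+\alpha,1+\alpha/2}$ bounds. These depend continuously on the coefficients (over the bounded sets fixed by (A2), in the topology of $\mathcal{C}$) and, by linearity, boundedly on the $L_2(D)$-datum, yielding continuous dependence of the solution at $t = 1$ with values in $C^{2+\alpha}(\bar D) \hookrightarrow C^1(\bar D)$; this is the same continuous-dependence estimate already invoked for Lemma~\ref{lm:w-and-kappa-continuity} following \cite[Prop.~2.5.1]{MiSh3}, and I would carry it out there, noting that no uniform bound on $c_0$ over $\Omega$ is needed since $\mathcal{S}$ is continuous for fixed coefficient data. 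Should one wish to sidestep continuity into a Hölder space, the same conclusion follows by combining the first-paragraph reduction with two applications of Proposition~\ref{prop:Pettis}: the isometric embedding $C^1(\bar D) \hookrightarrow C(\bar D)^{N+1}$, $v \mapsto (v, \p_1 v, \dots, \p_N v)$, reduces $C^1$-measurability of $N$ to $\mathfrak{F}$-measurability of the scalar evaluations $\omega \mapsto N(\omega)(x)$ and $\omega \mapsto \p_i N(\omega)(x)$ for each fixed $x \in \bar D$, which one obtains from the continuity in $(\mathbf{a}, u_0)$ of these individual functionals.
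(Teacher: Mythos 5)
Your proposal is correct, and its skeleton coincides with the paper's: both start from the cocycle identity
\begin{equation*}
w(\omega) = \frac{U_{\theta_{-1}\omega}(1)\,w(\theta_{-1}\omega)}{\norm{U_{\theta_{-1}\omega}(1)\,w(\theta_{-1}\omega)}}, \qquad \omega \in \tilde{\Omega}_0,
\end{equation*}
and reduce the lemma to writing the time-one solution as a continuous function of measurably-varying coefficients and initial data. Where you genuinely diverge is in the treatment of the unbounded zero-order coefficient. The paper exhausts $\tilde{\Omega}_0$ by the sets $\hat{\Omega}_M = \{\,\omega : \lVert c_0^{\omega}|_{[-1,0]\times\bar D}\rVert_{C^{3,2}} \le M\,\}$ (this is its \emph{only} use of Lemma~\ref{lm:measurable-C3-2}, namely to see $\hat{\Omega}_M \in \mathfrak{F}$), and on each $\hat{\Omega}_M$ it forms, via Ascoli--Arzel\`a, a \emph{compact} metrizable coefficient space $\hat{Y}_M$ carrying the uniform topology; Proposition~\ref{prop:Pettis} gives measurability of the coefficient map into $\hat{Y}_M$, and the argument of \cite[Prop.\ 2.5.4]{MiSh3} gives continuity of $(\hat a, u_0) \mapsto \hat{u}(0;\hat a,u_0)/\norm{\hat{u}(0;\hat a,u_0)}$ on $\hat{Y}_M \times (L_2(D)^{+}\setminus\{0\})$, the strong maximum principle guaranteeing the denominator is nonzero. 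You avoid the exhaustion entirely by topologizing the coefficients with separable integer-order norms ($C^{3,2}$ for $c_0$), i.e.\ you exploit Lemma~\ref{lm:measurable-C3-2} for its full strength rather than only for the measurability of the sets $\hat{\Omega}_M$. This trade is legitimate: measurability into the finer separable spaces comes from the same Pettis-plus-difference-quotient mechanism, while continuity of the solution map in the finer norm topology is a \emph{weaker} demand than continuity in the uniform topology on bounded sets --- as you observe, a norm-convergent sequence of $c_0$'s is automatically norm-bounded, so local boundedness substitutes for the paper's global bound $M$, and norm convergence implies exactly the uniform convergence plus uniform higher-order bounds under which the continuous-dependence results of \cite{MiSh3} apply. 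Both proofs therefore rest on the same analytic input; the paper's compactness route keeps the coefficient topology weak at the price of the decomposition, yours is decomposition-free at the price of working in stronger coefficient spaces and checking that continuity survives there. Two minor remarks: your pulling out of the scalar $1/\norm{N(\omega)}$ is equivalent to the paper's composing with the continuous normalization map; and once $N$ is shown measurable into $C^1(\bar D)$, the measurability of $\norm{N(\omega)}$ follows for free from the continuous embedding $C^1(\bar D) \hookrightarrow L_2(D)$, so your first-paragraph appeal to the measurable skew-product structure of $\Phi$ is dispensable.
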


\begin{proof}
Write $\tilde{\Omega}_0 = \bigcup\limits_{M > 0} \hat{\Omega}_{M}$, where
\begin{equation*}
\hat{\Omega}_{M} := \{\, \omega \in \tilde{\Omega}_0 : \lVert c^{\omega}_0|_{[-1, 0] \times \bar{D}} \rVert_{C^{3, 2}([-1, 0] \times \bar{D})} \le M \,\}.
\end{equation*}
In view of Lemma~\ref{lm:measurable-C3-2}, for each $M > 0$ the set $\hat{\Omega}_{M}$ belongs to $\mathfrak{F}$.  So it suffices to prove the measurability of $w$ restricted to $\hat{\Omega}_{M}$, for each $M > 0$. In order to do this, observe first that (A2) implies that the closure, $\hat{Y}_{M}$, of
\begin{equation*}
\{ \, (a_{ij}^{\omega}|_{[-1, 0] \times \bar{D}}, a_{i}^{\omega}|_{[-1, 0] \times \bar{D}}, b_{i}^{\omega}|_{[-1, 0] \times \bar{D}}, c_0^{\omega}|_{[-1, 0] \times \bar{D}}, d_0^{\omega}|_{[-1, 0] \times \bar{D}} ) : \omega \in \hat{\Omega}_{M} \, \}
\end{equation*}
in the topology of $C([-1, 0] \times \bar{D}, \RR^{N^2+2N+1}) \times C([-1, 0] \times \partial D, \RR)$ is, by the Ascoli--Arzel\`a theorem, a compact metrizable space (for Dirichlet or Neumann boundary conditions we put $d_0^{\omega}$ constantly equal to zero), consisting of functions whose $(C^{2+\alpha, 2+\alpha}([-1,0] \times \bar{D}, \RR^{N^2}) \times C^{2+\alpha, 1+\alpha}([-1,0] \times \bar{D}, \RR^{N+1}) \times C^{2+\alpha, 2+\alpha}([-1,0] \times \partial D, \RR))$-norms are uniformly bounded.  Define $\hat{\mathcal{E}}_{M} \colon \hat{\Omega}_{M} \to \hat{Y}_{M}$ as
\begin{equation*}
\hat{\mathcal{E}}_{M}(\omega) := (a_{ij}^{\omega}|_{[-1, 0] \times \bar{D}}, a_{i}^{\omega}|_{[-1, 0] \times \bar{D}}, b_{i}^{\omega}|_{[-1, 0] \times \bar{D}}, c_0^{\omega}|_{[-1, 0] \times \bar{D}}, d_0^{\omega}|_{[-1, 0] \times \bar{D}} ).
\end{equation*}
The $(\mathfrak{F}, \mathfrak{B}(\hat{Y}_{M}))$\nobreakdash-\hspace{0pt}measurability of $\hat{\mathcal{E}}_{M}$ is a consequence of Proposition~\ref{prop:Pettis}.

For $\hat{a} = (\hat{a}_{ij}, \hat{a}_{i}, \hat{b}_{i}, \hat{c}_0, \hat{d}_0) \in \hat{Y}_{M}$ and $u_0 \in L_2(D)$ denote by $\hat{u}(t; \hat{a}, u_0)$ the solution, on $[-1, 0]$, of \eqref{main-eq}+\eqref{main-bc}, but with $a_{ij}(\theta_{t}\omega, x)$ replaced with $\hat{a}_{ij}(t, x)$, etc., satisfying the initial condition $\hat{u}(-1; \hat{a}, u_0) = u_0$.  By an argument as in the proof of~\cite[Proposition 2.5.4]{MiSh3}, the mapping
\begin{equation*}
\bigl[\, \hat{Y}_{M} \times L_2(D) \ni (\hat{a}, u_0) \mapsto \hat{u}(0; \hat{a}, u_0) \in C^{1}(\bar{D}) \, \bigr]
\end{equation*}
is continuous.

The restriction $w|_{\hat{\Omega}_{M}}$ equals the composition of
\begin{equation*}
\bigl[\, \hat{\Omega}_{M} \ni \omega \mapsto (\hat{\mathcal{E}}_{M}(\omega), w(\theta_{-1}\omega)) \in \hat{Y}_{M} \times L_2(D) \, \bigr],
\end{equation*}
which is $(\mathfrak{F}, \mathfrak{B}(\hat{Y}_{M}) \otimes \mathfrak{B}(L_2(D)))$\nobreakdash-\hspace{0pt}measurable by construction and Theorem~\ref{thm:floquet}, and
\begin{equation*}
\Bigl[\, \hat{Y}_{M} \times (L_2(D)^{+} \setminus \{0\}) \ni (\hat{a}, u_0) \mapsto \frac{\hat{u}(0; \hat{a}, u_0)}{\norm{\hat{u}(0; \hat{a}, u_0)}} \in C^{1}(\bar{D}) \, \Bigr],
\end{equation*}
which is continuous, since it follows from the parabolic strong maximum principle that $\hat{u}(0; \hat{a}, u_0) \ne 0$ for $u_0 \in L_2(D)^{+} \setminus \{0\}$. It then follows that $w \colon \hat{\Omega}_M \to C^{1}(\bar{D})$ is $(\mathfrak{F}, \mathfrak{B}(C^{1}(\bar{D})))$-measurable.
\end{proof}

\begin{lemma}
\label{lm:kappa-measurability-C1}
$\kappa \colon \Omega \to \RR$ is $(\mathfrak{F}, \mathfrak{B}(\RR))$\nobreakdash-\hspace{0pt}measurable.
\end{lemma}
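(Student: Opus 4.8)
The plan is to exhibit $\kappa$ as an integral in the space variable and to reduce its measurability in $\omega$ to the joint measurability of the integrand, which a Carath\'eodory-type argument will deliver from the $C^{1}$-measurability of $w$ just established in Lemma~\ref{lm:w-measurability-C1}. Putting $u = v = w(\omega)$ in \eqref{measurable-bilinear-form-DC}, in the Dirichlet and Neumann cases one has $\kappa(\omega) = - \int_{D} g(\omega, x) \, dx$, where
\begin{multline*}
g(\omega, x) := \bigl( a_{ij}(\omega,x) \p_{j} w(\omega)(x) + a_{i}(\omega,x) w(\omega)(x) \bigr) \p_{i} w(\omega)(x) \\
- \bigl( b_{i}(\omega,x) \p_{i} w(\omega)(x) + c_0(\omega,x) w(\omega)(x) \bigr) w(\omega)(x)
\end{multline*}
(summation convention), while in the Robin case \eqref{measurable-bilinear-form-RC} contributes in addition the boundary term $- \int_{\p D} d_0(\omega,x) \, w(\omega)(x)^2 \, dH_{N-1}$. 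It therefore suffices to prove that $g$ is $(\mathfrak{F} \otimes \mathfrak{B}(D), \mathfrak{B}(\RR))$-measurable and then to carry the $x$-integration through by Tonelli's theorem.

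The key step is to upgrade Lemma~\ref{lm:w-measurability-C1} to joint measurability of $(\omega, x) \mapsto w(\omega)(x)$ and of $(\omega, x) \mapsto \p_{i} w(\omega)(x)$. I would argue as follows. Since $w \colon \tilde{\Omega}_0 \to C^{1}(\bar{D})$ is $(\mathfrak{F}, \mathfrak{B}(C^{1}(\bar{D})))$-measurable and the point evaluations $[\, u \mapsto u(x) \,]$ and $[\, u \mapsto \p_{i} u(x) \,]$ are continuous linear functionals on $C^{1}(\bar{D})$, for each fixed $x$ the maps $[\, \omega \mapsto w(\omega)(x) \,]$ and $[\, \omega \mapsto \p_{i} w(\omega)(x) \,]$ are $(\mathfrak{F}, \mathfrak{B}(\RR))$-measurable, being compositions of a measurable map with a continuous one. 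On the other hand, for each fixed $\omega \in \tilde{\Omega}_0$ the maps $[\, x \mapsto w(\omega)(x) \,]$ and $[\, x \mapsto \p_{i} w(\omega)(x) \,]$ are continuous on $\bar{D}$, as $w(\omega) \in C^{1}(\bar{D})$. Both are thus Carath\'eodory functions, and \cite[Lemma 4.51 on p.~153]{AliB} (already invoked in the proof of Lemma~\ref{lm:L1}) yields their joint $(\mathfrak{F} \otimes \mathfrak{B}(\bar{D}), \mathfrak{B}(\RR))$-measurability; restriction to $D$ gives joint measurability over $\Omega \times D$.

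The remaining steps I expect to be routine. The coefficients $a_{ij}$, $a_{i}$, $b_{i}$, $c_0$ are jointly measurable by (A1), so $g$, a finite sum of products of jointly measurable real functions, is jointly $(\mathfrak{F} \otimes \mathfrak{B}(D), \mathfrak{B}(\RR))$-measurable. For each $\omega \in \tilde{\Omega}_0$ the section $g(\omega, \cdot)$ is continuous on $\bar{D}$, hence bounded and integrable over the bounded domain $D$; writing $g = g^{+} - g^{-}$ and applying Tonelli's theorem to each nonnegative part shows that $[\, \omega \mapsto \int_{D} g(\omega, x) \, dx \,]$ is measurable. In the Robin case the integrand $(\omega, x) \mapsto d_0(\omega, x) \, w(\omega)(x)^2$ is jointly measurable on $\Omega \times \p D$ by the same Carath\'eodory argument together with the measurability of $d_0$ from (A1), and $H_{N-1}(\p D) < \infty$ since $\p D$ is compact under (A0), so the boundary term is measurable in $\omega$ as well. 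Finally, as $w$---and hence $\kappa$---is a~priori defined only on the invariant full-measure set $\tilde{\Omega}_0 \in \mathfrak{F}$, completeness of $\PP$ allows $\kappa$ to be assigned any value on the null set $\Omega \setminus \tilde{\Omega}_0$ without affecting its measurability on all of $\Omega$.

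The main obstacle is the first-order term $\p_{i} w(\omega)(x)$: mere $L_2$- or $C^{0}$-measurability of $w$ would give no control over its derivatives, and this is precisely the point where the passage to $C^{1}$-measurability in Lemma~\ref{lm:w-measurability-C1} becomes indispensable. Once the point evaluations of $w$ and of its first-order derivatives are recognized as continuous functionals on $C^{1}(\bar{D})$, the Carath\'eodory lemma does the essential work, and no further delicate estimates are required.
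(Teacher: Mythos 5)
Your proof is correct, but it follows a genuinely different route from the paper's. The paper stays at the level of function spaces: by Proposition~\ref{prop:Pettis} and (A2) the map $\bigl[\,\omega \mapsto a_{ij}(\omega,\cdot) \in C(\bar{D})\,\bigr]$ is measurable, and then each summand of $B_{\omega}(w(\omega),w(\omega))$ is obtained by composing the measurable map $\omega \mapsto (a_{ij}(\omega,\cdot), w(\omega), w(\omega))$ with the \emph{continuous} functional $\bigl[\,(z,u,v) \mapsto \int_D z\,\p_j u\,\p_i v\,dx\,\bigr]$ on $C(\bar{D}) \times C^{1}(\bar{D}) \times C^{1}(\bar{D})$; measurability is immediate from ``measurable composed with continuous,'' with no joint measurability in $(\omega,x)$ ever needed. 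You instead work pointwise: point evaluations of $w(\omega)$ and $\p_i w(\omega)$ plus the Carath\'eodory lemma \cite[Lemma 4.51]{AliB} give joint measurability of the integrand on $\Omega \times D$, and Tonelli carries the $x$-integration through. Both arguments hinge on Lemma~\ref{lm:w-measurability-C1} in exactly the same way (and your remark that $C^{1}$-measurability is what controls the derivative terms identifies the real crux correctly). What the paper's route buys is brevity — no Fubini-type machinery, no splitting into $g^{+}-g^{-}$, no integrability check; what your route buys is that it uses the joint measurability hypothesis (A1) on the coefficients directly rather than re-deriving Banach-space-valued measurability of their sections via Pettis, and it treats the entire integrand (including the Robin boundary term) by one uniform mechanism. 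Your closing point about extending $\kappa$ from $\tilde{\Omega}_0$ to $\Omega$ is a detail the paper glosses over; note only that completeness of $\PP$ is not actually needed there, since $\tilde{\Omega}_0 \in \mathfrak{F}$ and one may simply set $\kappa \equiv 0$ on the measurable complement.
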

\begin{proof}
By Proposition~\ref{prop:Pettis} and (A2), the mapping $\bigl[\, \Omega \ni \omega \mapsto a^{\omega}_{ij}(0, \cdot) \in C(\bar{D}) \, \bigr]$ is $(\mathfrak{F}, \mathfrak{B}(C(\bar{D})))$\nobreakdash-\hspace{0pt}measurable.  As the mapping
\begin{equation*}
\Biggl[ C(\bar{D}) \times C^{1}(\bar{D}) \times C^{1}(\bar{D}) \ni (z, u, v) \mapsto \int\limits_{D} \bigl( z(x) \partial_{j} u(x) \bigr) \partial_{i}v(x) \, dx \in \RR \Biggr]
\end{equation*}
is continuous, it follows that
\begin{equation*}
\Biggl[ \tilde{\Omega}_{0} \ni \omega \mapsto \int\limits_{D} \bigl( a_{ij}(\omega,x) \partial_{j} w(\omega) \bigr) \partial_{i} w(\omega) \, dx \in \RR \Biggr]
\end{equation*}
is $(\mathfrak{F}, \mathfrak{B}(\RR))$\nobreakdash-\hspace{0pt}measurable.  The measurability of the remaining summands is proved in a similar way.
\end{proof}

\begin{lemma}
\label{lm:kappa-summability}
$\kappa^{+} \in L_1(\OFP)$.
\end{lemma}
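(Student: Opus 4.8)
The plan is to establish a pointwise upper bound of the form $\kappa(\omega) \le C + c_0^{(+)}(\omega)$ valid for every $\omega \in \tilde{\Omega}_0$, with a constant $C$ independent of $\omega$, and then to integrate. Since $\kappa$ is $(\mathfrak{F},\mathfrak{B}(\RR))$\nobreakdash-measurable by Lemma~\ref{lm:kappa-measurability-C1}, so is $\kappa^{+}$, and the bound reduces the claim to the integrability of $c_0^{(+)}$, which is furnished by Lemma~\ref{lm:L1}. Throughout, recall that $w(\omega) \in C^{1}(\bar{D})$ with $\norm{w(\omega)} = 1$, so every integral below is well defined.

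First I would write out $\kappa(\omega) = -B_{\omega}(w(\omega),w(\omega))$ explicitly from \eqref{measurable-bilinear-form-DC} (respectively \eqref{measurable-bilinear-form-RC}), abbreviating $w = w(\omega)$. The principal part contributes $-\int_{D} a_{ij}(\omega,x)\,\p_{j}w\,\p_{i}w\,dx$, which by the ellipticity assumption (A3) is bounded above by $-\alpha_0\int_{D}\abs{\nabla w}^{2}\,dx$. In the Robin case the boundary integral contributes $-\int_{\p D} d_0(\omega,x)\,w^{2}\,dH_{N-1} \le 0$, since $d_0 \ge 0$ by (A1); thus it may simply be discarded for the purpose of an upper estimate.

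Next I would handle the two first\nobreakdash-order contributions, $-\int_{D} a_i(\omega,x)\,w\,\p_{i}w\,dx$ and $\int_{D} b_i(\omega,x)\,\p_{i}w\,w\,dx$. Let $M$ be a common uniform (in $\omega$) sup\nobreakdash-norm bound for the coefficients $a_i$ and $b_i$, which exists by (A2)(i)--(ii). By the Cauchy--Schwarz inequality and $\norm{w}=1$, each of these terms is bounded in absolute value by $cM\bigl(\int_{D}\abs{\nabla w}^{2}\,dx\bigr)^{1/2}$ for a dimensional constant $c$; Young's inequality then gives $cM\bigl(\int_{D}\abs{\nabla w}^{2}\,dx\bigr)^{1/2} \le \tfrac{\alpha_0}{4}\int_{D}\abs{\nabla w}^{2}\,dx + \tfrac{c^{2}M^{2}}{\alpha_0}$, so that the two first\nobreakdash-order terms together are absorbed into $\tfrac{\alpha_0}{2}\int_{D}\abs{\nabla w}^{2}\,dx$ plus an $\omega$\nobreakdash-independent constant. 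Combined with the principal part this leaves $-\tfrac{\alpha_0}{2}\int_{D}\abs{\nabla w}^{2}\,dx \le 0$, which I discard. Finally, the zero\nobreakdash-order term satisfies $\int_{D} c_0(\omega,x)\,w^{2}\,dx \le \bigl(\sup_{x\in\bar{D}}c_0(\omega,x)\bigr)^{+}\int_{D}w^{2}\,dx = c_0^{(+)}(\omega)$, again using $\norm{w}=1$. Collecting the estimates yields $\kappa(\omega) \le C + c_0^{(+)}(\omega)$ with $C = C(\alpha_0,M,N)$.

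To conclude, since $C > 0$ and $c_0^{(+)} \ge 0$ we have $\kappa^{+} \le C + c_0^{(+)}$ pointwise on $\tilde{\Omega}_0$; integrating over $\Omega$ and using $\PP(\Omega)=1$ together with Lemma~\ref{lm:L1}, which provides $c_0^{(+)} \in L_1(\OFP)$ from (A4)(i), gives $\kappa^{+} \in L_1(\OFP)$. The only point that genuinely needs care is the uniform\nobreakdash-in\nobreakdash-$\omega$ absorption of the first\nobreakdash-order terms via Young's inequality; the remainder is a Gårding\nobreakdash-type estimate. It is worth emphasizing that only an upper bound on $\kappa$ is required, so (A4)(i) alone suffices and neither $c_0^{(-)}$ nor assumption (A4)(ii) enters the argument—these would be needed only for a two\nobreakdash-sided estimate, i.e.\ for $\kappa \in L_1(\OFP)$.
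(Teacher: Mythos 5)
Your proof is correct, but it takes a genuinely different route from the paper's. You estimate $\kappa(\omega) = -B_{\omega}(w(\omega),w(\omega))$ directly from the structure of the Dirichlet form: ellipticity (A3) controls the principal part, the sign of $d_0$ disposes of the Robin boundary term, the uniform bounds in (A2)(i)--(ii) together with Cauchy--Schwarz and Young absorb the first-order terms, and $\norm{w(\omega)}=1$ turns the zero-order term into $c_0^{(+)}(\omega)$ --- a self-contained G{\aa}rding-type argument giving $\kappa(\omega) \le C + c_0^{(+)}(\omega)$ with an explicit constant $C = C(\alpha_0, M, N)$. The paper reaches the same kind of pointwise bound, $\kappa(\omega) \le c_0^{(+)}(\omega) + \gamma$ for $\PP$-a.e.\ $\omega$, but dynamically: it combines the integral identity \eqref{eq:kappa-2} of Lemma~\ref{lm:Lyapunov-integral-kappa} with the semiflow norm estimate of Lemma~\ref{lm:gamma} to bound $\int_0^t \kappa(\theta_{\tau}\omega)\,d\tau$, and then passes to the pointwise inequality by dividing by $t$ and letting $t \to 0^{+}$, which requires the continuity statements of Lemmas~\ref{lm:w-and-kappa-continuity} and~\ref{lm:c-zero-pm}(ii). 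From that point on the two proofs coincide: measurability of $\kappa$ (Lemma~\ref{lm:kappa-measurability-C1}) plus $c_0^{(+)} \in L_1(\OFP)$ (Lemma~\ref{lm:L1}) finish the job. Your argument is more elementary and makes the constant explicit, at the price of redoing coefficient estimates that the paper outsources, via Lemma~\ref{lm:gamma}, to \cite{MiShPart3}; the paper's route reuses machinery it needs anyway (the identity \eqref{eq:kappa-2} is also the engine of Theorem~\ref{thm:kappa}). One pedantic point in your version: (A3) asserts the ellipticity inequality only for Lebesgue-a.e.\ $(t,x) \in \RR \times D$, so to apply it on the slice $t=0$, where $B_{\omega}$ lives, you should invoke the continuity of $a_{ij}^{\omega}$ guaranteed by (A2)(i); this is harmless. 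Your closing observation that only (A4)(i) is needed matches the paper, whose proof likewise uses only Lemma~\ref{lm:L1}.
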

\begin{proof}
Application of~\eqref{eq:kappa-2}, combined with Lemma~\ref{lm:gamma}, implies that, for some $\gamma \in \RR$,
\begin{equation*}
\int\limits_{0}^{t} \kappa(\theta_{\tau}\omega) \, d\tau \le \int\limits_{0}^{t} c_0^{(+)}(\theta_{\tau}\omega) \, d\tau + \gamma t
\end{equation*}
for all $\omega \in \tilde{\Omega}_0$ and $t > 0$.  By the continuity of $\bigl[ \, [0,\infty) \ni t \mapsto \kappa(\theta_{t}\omega) \,\bigr]$ (see Lemma~\ref{lm:w-and-kappa-continuity}) and $\bigl[\, [0,\infty) \ni t \mapsto c_0^{(+)}(\theta_{t}\omega) \,\bigr]$ (see Lemma~\ref{lm:c-zero-pm}(ii)), there holds
\begin{equation*}
\kappa(\omega) \le c_0^{(+)}(\omega) + \gamma
\end{equation*}
for $\PP$\nobreakdash-\hspace{0pt}a.e.\ $\omega \in \Omega$, which gives, via Lemma~\ref{lm:L1}, the desired result.
\end{proof}

\begin{proof}[Proof of Theorem~\ref{thm:kappa}]
Application of the Birkhoff Ergodic Theorem (see, e.g., \cite{Kre}) gives that for $\PP$\nobreakdash-\hspace{0pt}a.e.\ $\omega \in \Omega$
\begin{equation*}
\lim\limits_{t \to \infty} \frac{1}{t} \int\limits_{0}^{t} \kappa(\theta_{\tau}\omega) \, d\tau = \int\limits_{\Omega} \kappa(\cdot) \, d\PP(\cdot),
\end{equation*}
from which the conclusion follows immediately.
\end{proof}

We would like to emphasize here the differences between the theory presented in~\cite{MiSh3} and the theory of measurable positive skew-product flows as presented in~\cite{MiShPart3}.  Namely, in the former a random (or nonautonomous) linear parabolic PDE generates a \textbf{topological} linear skew-product semiflow  (the base $Y$, that is, the set of parameters, is a compact metrizable space).  The distinguished solutions (whose logarithmic growth rates are called {\em principal Lyapunov exponents\/}) correspond to a \textbf{continuous} function $w$ defined on the base $Y$.  That simplifies the proofs considerably compared with these in the present paper.

\section{Estimates from above}
\label{sect:estimates}
In the present section we consider symmetric problems of the form
\begin{equation}
\label{main-eq-symmetric}
\begin{cases}
\displaystyle \frac{\partial u}{\partial t} = \sum_{i=1}^{N} \frac{\partial}{\partial x_i} \biggl( \sum_{j=1}^{N} a_{ij}(\theta_{t}\omega,x) \frac{\partial u}{\partial x_{j}} \biggr) + c_0(\theta_{t}\omega,x)u, & \qquad t > s, \ x \in D,
\\[1ex]
\mathcal{B}_{\theta_{t}\omega}u = 0 & \qquad t > s, \ x \in \partial D,
\end{cases}
\end{equation}
where
\begin{equation*}
\mathcal{B}_{\omega} u =
\begin{cases}
u & \text{(Dirichlet)}
\\[1.5ex]
\displaystyle \sum_{i=1}^N \biggl( \sum_{j=1}^N a_{ij}(\omega, x) \frac{\partial u}{\partial x_j} \biggr) \nu_i  & \text{(Neumann)}
\\[1.5ex]
\displaystyle \sum_{i=1}^N \biggl( \sum_{j=1}^N a_{ij}(\omega, x) \frac{\partial u}{\partial x_j} \biggr) \nu_i + d_0(\omega, x)u  & \text{(Robin).}
\end{cases}
\end{equation*}
To emphasize that \eqref{main-eq-symmetric} is considered for some (fixed) $\omega \in \Omega$ we write \eqref{main-eq-symmetric}$_{\omega}$.

We assume (A0) through (A4).

The Dirichlet form $B_{\omega}(\cdot,\cdot)$ takes the form:
\begin{equation}
\label{measurable-bilinear-form-DC-symmetric}
B_{\omega}(u,v) = \int_D ( a_{ij}(\omega,x)\partial_{j} u \, \partial_{i} v + c_0(\omega,x) u v ) \,dx, \quad u,v \in V,
\end{equation}
in the Dirichlet and Neumann cases, and
\begin{equation}
\label{measurable-bilinear-form-RC-symmetric}
B_{\omega}(u,v) = \int_D ( a_{ij}(\omega,x)\partial_{j} u \,  \partial_{i} v + c_0(\omega,x) u v ) \,dx + \int_{\partial D} d_0(\omega,x) u v \,dH_{N-1}, \quad u,v \in V,
\end{equation}
in the Robin case.

It is well known (see, e.g., \cite{Evans}) that, for fixed $\omega \in \Omega$, the largest (necessarily real) eigenvalue of the (elliptic) boundary value problem
\begin{equation}
\label{principal-symmetric}
\begin{cases}
\displaystyle \lambda u = \sum_{i=1}^{N} \frac{\partial}{\partial x_i} \biggl( \sum_{j=1}^{N} a_{ij}(\omega,x) \frac{\partial u}{\partial x_{j}} \biggr) + c_0(\omega,x)u, & \qquad x \in D,
\\[1ex]
\mathcal{B}_{\omega}u = 0 & \qquad x \in \partial D
\end{cases}
\end{equation}
equals
\begin{equation}
\label{eq:rayleigh}
\max\{\, -B_{\omega}(u,u) : u \in V, \norm{u} = 1 \,\}.
\end{equation}
We will denote this quantity (called the {\em principal eigenvalue of\/} \eqref{principal-symmetric}$_{\omega}$) by $\lambda_{\mathrm{princ}}(\omega)$.  Moreover, there exists $v \in V$, $\norm{v} = 1$, such that $v(x) > 0$ for all $x \in D$ and
the maximum in~\eqref{eq:rayleigh} is attained precisely at $v$ and $-v$.  Such a $v$ is called the {\em normalized principal eigenfunction of\/} \eqref{principal-symmetric}$_{\omega}$.

\begin{lemma}
\label{lm:principal-measurability-symmetric}
$\bigl[\, \Omega \ni \omega \mapsto \lambda_{\mathrm{princ}}(\omega) \in \RR \,\bigr]$ is $(\mathfrak{F},  \mathfrak{B}(\RR))$\nobreakdash-\hspace{0pt}measurable.
\end{lemma}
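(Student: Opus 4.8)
The plan is to use the variational characterization \eqref{eq:rayleigh} of $\lambda_{\mathrm{princ}}(\omega)$ as the supremum of $-B_{\omega}(u,u)$ over the $L_2$\nobreakdash-\hspace{0pt}unit sphere $S := \{\, u \in V : \norm{u} = 1 \,\}$ of $V$, and then to reduce this supremum over $S$ to a supremum over a fixed countable set, to which the elementary fact that a countable supremum of measurable functions is measurable applies.

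First I would choose a countable set $\{u_k\}_{k \in \NN} \subset S$ that is dense in $S$ for the norm of $V$; such a set exists because $V$, being a closed subspace of the separable space $W^1_2(D)$, is itself separable. For each fixed $\omega$ the coefficients $a_{ij}(\omega,\cdot)$ and $c_0(\omega,\cdot)$ are continuous on $\bar{D}$ by (A2)(i) and (A2)(iv), hence bounded, and in the Robin case $d_0(\omega,\cdot)$ is likewise bounded on $\p D$ by (A2)(iii); together with the continuity of the trace operator $W^1_2(D) \to L_2(\p D)$ (valid under (A0)) this makes $B_{\omega}(\cdot,\cdot)$ a bounded bilinear form on $V$, so that $[\, u \mapsto -B_{\omega}(u,u) \,]$ is continuous on $V$. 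Density of $\{u_k\}$ in $S$ then gives, for every $\omega$,
\begin{equation*}
\lambda_{\mathrm{princ}}(\omega) = \sup_{u \in S} \bigl( -B_{\omega}(u,u) \bigr) = \sup_{k \in \NN} \bigl( -B_{\omega}(u_k, u_k) \bigr).
\end{equation*}

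It then remains to show that for each fixed $k$ the map $[\, \Omega \ni \omega \mapsto B_{\omega}(u_k, u_k) \in \RR \,]$ is $(\mathfrak{F}, \mathfrak{B}(\RR))$\nobreakdash-\hspace{0pt}measurable, which I would do termwise. For the principal term, the integrand $[\, (\omega, x) \mapsto a_{ij}(\omega,x)\, \p_j u_k(x)\, \p_i u_k(x) \,]$ is $(\mathfrak{F} \otimes \mathfrak{B}(D))$\nobreakdash-\hspace{0pt}measurable by (A1), being the product of the measurable function $a_{ij}$ with the fixed $\mathfrak{B}(D)$\nobreakdash-\hspace{0pt}measurable function $\p_j u_k\, \p_i u_k$, and for each $\omega$ it is integrable over the bounded domain $D$ since $a_{ij}(\omega,\cdot)$ is bounded and $\p_j u_k\, \p_i u_k \in L_1(D)$. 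Applying Tonelli's theorem to the positive and negative parts (each finite by integrability) shows that $[\, \omega \mapsto \int_D a_{ij}(\omega,x)\, \p_j u_k\, \p_i u_k \, dx \,]$ is $\mathfrak{F}$\nobreakdash-\hspace{0pt}measurable. The zero\nobreakdash-\hspace{0pt}order term is handled identically, using $c_0(\omega,\cdot) \in C(\bar{D})$ and $u_k^2 \in L_1(D)$, and in the Robin case the boundary term $[\, \omega \mapsto \int_{\p D} d_0(\omega,x)\, (u_k|_{\p D})^2 \, dH_{N-1} \,]$ is treated the same way on the finite measure space $(\p D, \mathfrak{B}(\p D), H_{N-1})$. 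Hence each $[\, \omega \mapsto B_{\omega}(u_k,u_k) \,]$ is measurable, and so is their countable supremum $\lambda_{\mathrm{princ}}$.

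The step most deserving of care is the passage from the supremum over $S$ to the countable supremum: one must confirm that $B_{\omega}(\cdot,\cdot)$ is continuous on $V$ in the \emph{full} $V$\nobreakdash-\hspace{0pt}norm for each fixed $\omega$, which in the Robin case hinges on the $W^1_2(D) \to L_2(\p D)$ trace estimate so that the boundary contribution does not spoil this continuity. Granted that, approximating an arbitrary $u \in S$ by the $u_k$ recovers the whole supremum and the argument closes; working with a fixed (rather than $\omega$\nobreakdash-\hspace{0pt}dependent) $u_k$ is what lets me invoke the plain Fubini--Tonelli machinery instead of the $C^1$\nobreakdash-\hspace{0pt}continuity argument used in Lemma~\ref{lm:kappa-measurability-C1}.
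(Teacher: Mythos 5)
Your proof is correct, and its skeleton coincides with the paper's: both reduce the variational formula \eqref{eq:rayleigh} to a countable supremum via separability of $V$ together with the $V$\nobreakdash-\hspace{0pt}norm continuity, for each fixed $\omega$, of the quadratic form (the paper takes the supremum of the Rayleigh quotient $-B_{\omega}(u,u)/\norm{u}^2$ over the nonzero elements of $\spanned_{\QQ}\{u_k\}$ for a sequence dense in $V$, while you take a countable dense subset of the unit sphere; this difference is immaterial). Where you genuinely diverge is in proving measurability of $\omega \mapsto B_{\omega}(u,u)$ for a \emph{fixed} $u$: the paper invokes the reasoning of Lemma~\ref{lm:kappa-measurability-C1}, i.e.\ Pettis' theorem (Proposition~\ref{prop:Pettis}) to get $(\mathfrak{F}, \mathfrak{B}(C(\bar{D})))$\nobreakdash-\hspace{0pt}measurability of $\omega \mapsto a_{ij}(\omega,\cdot)$, followed by composition with the continuous functional $z \mapsto \int_D z\, \p_j u\, \p_i u \, dx$ on $C(\bar{D})$; you instead use the joint measurability hypothesis (A1) directly and conclude by Fubini--Tonelli applied to the positive and negative parts of the integrand. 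Your route is more elementary and self-contained (no vector-valued measurability machinery, and it would survive with coefficients merely jointly measurable and bounded for each fixed $\omega$), whereas the paper's route is economical in context because the Pettis-based argument is needed anyway in Lemma~\ref{lm:kappa-measurability-C1}, where the test function $w(\omega)$ varies with $\omega$ in $C^1(\bar{D})$ and a joint-continuity argument is unavoidable; your closing remark correctly identifies that it is precisely the fixedness of $u_k$ that makes the simpler Tonelli mechanism available here.
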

\begin{proof}
Since $V$ is separable, we can take a countable set $\{\, u_k \in V: k \in \NN \,\}$ such that $\norm{u_k} = 1$ for each $k \in \NN$ and $\spanned_{\QQ}\{\, u_k: k \in \NN \,\}$ is dense in $V$.  As $\bigl[\, V \setminus \{0\} \ni u \mapsto -B_{\omega}(u, u)/\norm{u}^2 \in \RR \,\bigr]$ is, for each $\omega \in \Omega$, continuous, we have that
\begin{equation}
\begin{aligned}
\lambda_{\mathrm{princ}}(\omega) & = \max{\biggl\{\, \frac{-B_{\omega}(u,u)}{\norm{u}^2} : u \in V, u \ne 0 \,\biggr\}}
\\
& = \sup{\biggl\{\, \frac{-B_{\omega}(u,u)}{\norm{u}^2} : u \in \spanned_{\QQ}\{\, u_k: k \in \NN \,\}, u \ne 0 \,\biggr\}}.
\end{aligned}
\end{equation}
In view of the above it suffices to prove, repeating reasoning as in the proof of Lemma~\ref{lm:kappa-measurability-C1}, that for each nonzero $u \in \spanned_{\QQ}\{\, u_k: k \in \NN \,\}$ the mapping $\bigl[\, \Omega \ni \omega \mapsto -B_{\omega}(u, u)/\norm{u}^2 \in \RR \,\bigr]$ is $(\mathfrak{F},  \mathfrak{B}(\RR))$\nobreakdash-\hspace{0pt}measurable.
\end{proof}

\begin{lemma}
\label{lm:principal-summability-symmetric}
$\bigl[\, \Omega \ni \omega \mapsto \lambda^{+}_{\mathrm{princ}}(\omega) \in \RR \,\bigr] \in L_1(\OFP)$.
\end{lemma}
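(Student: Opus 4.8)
The plan is to produce an integrable pointwise majorant for $\lambda_{\mathrm{princ}}^{+}$ and then invoke Lemma~\ref{lm:L1}. Measurability of $\lambda_{\mathrm{princ}}^{+}$ is free: by Lemma~\ref{lm:principal-measurability-symmetric} the map $\omega \mapsto \lambda_{\mathrm{princ}}(\omega)$ is $(\mathfrak{F},\mathfrak{B}(\RR))$-measurable, hence so is its positive part. Thus the whole content is the estimate, valid for every $\omega \in \Omega$,
\begin{equation*}
\lambda_{\mathrm{princ}}(\omega) \le c_0^{(+)}(\omega).
\end{equation*}

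To prove this I would use the variational characterization \eqref{eq:rayleigh}. Fix $\omega$ and take any $u \in V$ with $\norm{u} = 1$. Writing out $-B_{\omega}(u,u)$ from the symmetric Dirichlet form gives, in the Dirichlet and Neumann cases,
\begin{equation*}
-B_{\omega}(u,u) = \int_D \bigl( -a_{ij}(\omega,x)\,\p_{j}u\,\p_{i}u + c_0(\omega,x)u^2 \bigr)\,dx,
\end{equation*}
with an additional term $-\int_{\p D} d_0(\omega,x)u^2\,dH_{N-1}$ in the Robin case. Now estimate the three contributions separately: by the ellipticity assumption (A3), $a_{ij}(\omega,x)\,\p_{j}u\,\p_{i}u \ge \alpha_0\lvert\nabla u\rvert^2 \ge 0$, so the gradient term contributes nonpositively; the boundary term is nonpositive because $d_0 \ge 0$ by (A1); and, since $\norm{u} = 1$,
\begin{equation*}
\int_D c_0(\omega,x)u^2\,dx \le \Bigl(\sup_{x \in \bar{D}} c_0(\omega,x)\Bigr)\int_D u^2\,dx \le c_0^{(+)}(\omega).
\end{equation*}
Hence $-B_{\omega}(u,u) \le c_0^{(+)}(\omega)$ for every admissible $u$, and taking the maximum yields the claimed inequality.

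Because $c_0^{(+)} \ge 0$, the estimate upgrades to $\lambda_{\mathrm{princ}}^{+}(\omega) \le c_0^{(+)}(\omega)$ for all $\omega$. Lemma~\ref{lm:L1} gives $c_0^{(+)} \in L_1(\OFP)$, so the nonnegative measurable function $\lambda_{\mathrm{princ}}^{+}$ is dominated by an $L_1$ function and therefore belongs to $L_1(\OFP)$. I do not expect a real obstacle; the only point needing care is the sign bookkeeping in the Robin case, where one must observe that the boundary contribution $-\int_{\p D} d_0 u^2\,dH_{N-1}$ is nonpositive and so can only improve the upper bound, together with the application of ellipticity to discard the gradient term.
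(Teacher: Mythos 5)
Your proof is correct, but it takes a genuinely different route from the paper's. The paper obtains the pointwise bound by ``copying the reasoning'' of Lemma~\ref{lm:kappa-summability}, i.e.\ it goes through the parabolic machinery: the semigroup estimate of Lemma~\ref{lm:gamma} combined with the growth-rate identity, which yields only the weaker inequality $\lambda_{\mathrm{princ}}(\omega) \le c_0^{(+)}(\omega) + \gamma$ with the extra additive constant $\gamma$ coming from Lemma~\ref{lm:gamma}; Lemma~\ref{lm:L1} then finishes the argument, since the constant $\gamma$ is harmless for integrability over a probability space. You instead work directly with the variational characterization \eqref{eq:rayleigh}, estimating the Rayleigh quotient term by term (ellipticity (A3) to discard the gradient term, nonnegativity of $d_0$ from (A1) to discard the Robin boundary term, and $c_0(\omega,\cdot) \le c_0^{(+)}(\omega)$ for the zero-order term). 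This is more elementary --- no appeal to the solution operator at all --- and gives the sharper bound $\lambda_{\mathrm{princ}}(\omega) \le c_0^{(+)}(\omega)$ without $\gamma$, for \emph{every} $\omega$ rather than $\PP$-a.e.\ $\omega$. One small point worth making explicit: you read $-B_{\omega}(u,u)$ as the specialization of \eqref{measurable-bilinear-form-DC}/\eqref{measurable-bilinear-form-RC} with $a_i = b_i = 0$, namely $\int_D (-a_{ij}\p_j u\, \p_i u + c_0 u^2)\,dx$ (minus the $d_0$ boundary integral in the Robin case); the displayed formulas \eqref{measurable-bilinear-form-DC-symmetric}--\eqref{measurable-bilinear-form-RC-symmetric} as printed contain typographical slips (a missing $\p_i v$ and a sign), and your reading is the only one consistent with \eqref{eq:rayleigh} being the Rayleigh-quotient characterization of the principal eigenvalue, so your computation rests on the intended form.
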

\begin{proof}
By copying the reasoning as in the proof of Lemma~\ref{lm:kappa-summability} we obtain that
\begin{equation*}
\lambda_{\mathrm{princ}}(\omega) \le c_0^{(+)}(\omega) + \gamma
\end{equation*}
for all $\omega \in \Omega$, which gives, via Lemma~\ref{lm:L1}, the desired result.
\end{proof}

Since, by~\eqref{eq:rayleigh}, $\kappa(\omega) \le \lambda_{\mathrm{princ}}(\omega)$ for $\PP$\nobreakdash-\hspace{0pt}a.e.\ $\omega \in \Omega$, we have, in view of the ergodicity of the flow $(\theta_t)_{t \in \RR}$, the following result.
\begin{theorem}
\label{thm:estimate}
\begin{equation}
\tilde{\lambda}_1 \le \int_{\Omega} \lambda_{\mathrm{princ}}(\omega) \, d\PP(\omega).
\end{equation}
\end{theorem}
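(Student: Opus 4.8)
The plan is to obtain Theorem~\ref{thm:estimate} as an essentially immediate consequence of the integral formula of Theorem~\ref{thm:kappa} combined with the variational characterization~\eqref{eq:rayleigh} of the principal eigenvalue. First I would invoke Theorem~\ref{thm:kappa} to write $\tilde{\lambda}_1 = \int_{\Omega} \kappa(\omega) \, d\PP(\omega)$, where $\kappa(\omega) = -B_{\omega}(w(\omega), w(\omega))$. The entire argument then reduces to comparing the integrand $\kappa$ with $\lambda_{\mathrm{princ}}$ pointwise and passing to the integral.

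The key observation is that, for each $\omega \in \tilde{\Omega}_0$, the distinguished function $w(\omega)$ is an admissible competitor in the maximization problem~\eqref{eq:rayleigh}. Indeed, every solution is classical, so $w(\omega)$ lies in $C^{1}(\bar{D})$ and satisfies the relevant boundary condition; hence $w(\omega) \in V$ in each of the Dirichlet, Neumann and Robin cases, and $\norm{w(\omega)} = 1$ by Theorem~\ref{thm:floquet}. Therefore
\begin{equation*}
\kappa(\omega) = -B_{\omega}(w(\omega), w(\omega)) \le \max\{\, -B_{\omega}(u,u) : u \in V, \ \norm{u} = 1 \,\} = \lambda_{\mathrm{princ}}(\omega)
\end{equation*}
for every $\omega \in \tilde{\Omega}_0$, hence for $\PP$\nobreakdash-\hspace{0pt}a.e.\ $\omega \in \Omega$.

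It then remains to integrate this pointwise inequality, and this is the only step that requires a little care, since $\tilde{\lambda}_1$ is a priori only known to lie in $[-\infty, \infty)$ (Theorem~\ref{thm:floquet}(ii)) and neither integrand need be integrable from below. The remedy is provided by Lemma~\ref{lm:kappa-summability}, giving $\kappa^{+} \in L_1(\OFP)$, and Lemma~\ref{lm:principal-summability-symmetric}, giving $\lambda_{\mathrm{princ}}^{+} \in L_1(\OFP)$: both $\int_{\Omega} \kappa \, d\PP$ and $\int_{\Omega} \lambda_{\mathrm{princ}} \, d\PP$ are thus well defined as elements of $[-\infty, \infty)$, with no $\infty - \infty$ ambiguity. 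Monotonicity of the integral for functions with integrable positive part then yields $\int_{\Omega} \kappa \, d\PP \le \int_{\Omega} \lambda_{\mathrm{princ}} \, d\PP$, and combining this with the identity from Theorem~\ref{thm:kappa} gives the asserted bound. I do not anticipate a genuine obstacle: the statement is effectively a corollary, and the main points demanding attention are the membership $w(\omega) \in V$ across all three boundary\nobreakdash-\hspace{0pt}condition cases and the measure\nobreakdash-\hspace{0pt}theoretic justification just described.
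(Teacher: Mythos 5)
Your proposal is correct and follows exactly the paper's route: the paper's (very terse) proof is precisely the observation that $w(\omega)\in V$ with $\norm{w(\omega)}=1$ makes it an admissible competitor in~\eqref{eq:rayleigh}, giving $\kappa(\omega)\le\lambda_{\mathrm{princ}}(\omega)$ for $\PP$\nobreakdash-\hspace{0pt}a.e.\ $\omega$, which is then integrated against Theorem~\ref{thm:kappa} using Lemmas~\ref{lm:principal-measurability-symmetric}, \ref{lm:principal-summability-symmetric} and~\ref{lm:kappa-summability} to make the integrals well defined in $[-\infty,\infty)$. Your write-up is in fact more explicit than the paper about the $\infty-\infty$ issue and the membership $w(\omega)\in V$, but the substance is identical.
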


\section*{Acknowledgments}
The authors wish to thank the referees for their remarks.


\end{document}